\title{Homological stability for configuration spaces of orbifolds}
\author{Jeffrey Bailes, TriThang Tran}
\date{\today}
\newtheorem*{rep@theorem}{\rep@title}
\newcommand{\newreptheorem}[2]{%
\newenvironment{rep#1}[1]{%
 \def\rep@title{#2 \ref{##1}}%
 \begin{rep@theorem}}%
 {\end{rep@theorem}}}
\newtheorem{theorem}{Theorem}[section]
\newtheorem{lemma}[theorem]{Lemma}
\newtheorem{proposition}[theorem]{Proposition}
\newtheorem{definition}[theorem]{Definition}
\theoremstyle{remark}
\newtheorem{remark}[theorem]{Remark}
\newcommand{\del}{\partial}
\newcommand{\mc}{\mathcal}
\newcommand{\imt}{\mathrm{int}}
\newcommand{\norm}[1]{\lVert#1\rVert}
\newcommand{\st}{\; \vert \;}
\newcommand{\conf}{\mathrm{Conf}}
\newcommand{\pconf}{\mathrm{PConf}}
\newcommand{\sym}{\Sigma}
\newcommand{\stab}{\mathrm{stab}}
\newcommand{\donf}{\mathrm{D}}
\newcommand{\fonf}{\mathrm{C}}
\begin{document}
\maketitle
\begin{abstract}
We prove that homological stability holds for configuration spaces of orbifolds. This builds on the work of Bailes' thesis where he proves that the stabilisation maps are injective.
 \end{abstract}

\section{Introduction}
The goal of this note is to prove the following.

\begin{theorem} \label{thm: main} Let $\mc{X}$ be a connected orbifold of dimension greater than or equal to 2 and let $\conf_n(\mc{X})$ be its configuration space. There are isomorphisms
	\[ H_k(\conf_n(\mc{X}) ; \mathbb{Q}) \cong H_k(\conf_{n+1}(\mc{X}); \mathbb{Q}) \]
for $k \leq n/2$.
\end{theorem}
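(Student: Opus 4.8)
The plan is to derive the theorem by combining two ingredients: the injectivity of the stabilisation maps, established in Bailes' thesis, and the statement that the rational Betti numbers $\dim_{\mathbb{Q}} H_k(\conf_n(\mc{X});\mathbb{Q})$ become independent of $n$ once $n \ge 2k$. Granted both, an injection between finite-dimensional $\mathbb{Q}$-vector spaces of equal dimension is an isomorphism, so the maps
\[
	H_k(\conf_n(\mc{X});\mathbb{Q}) \longrightarrow H_k(\conf_{n+1}(\mc{X});\mathbb{Q})
\]
are isomorphisms for $k \le n/2$. (We take $\mc{X}$ of finite type, so that each of these groups is finite-dimensional; if $\mc{X}$ is not of finite type one instead proves surjectivity of the stabilisation map directly by the usual homological stability machinery.) After recalling Bailes' result, the content is thus entirely in the Betti-number computation.

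First I would fix the stabilisation map and record an algebraic structure around it. Since $\mc{X}$ is connected of dimension $d \ge 2$, its regular locus is a nonempty open suborbifold and hence contains a chart diffeomorphic to $\mathbb{R}^d$; fixing such a chart and a half-space inside it supplies a ``push in one more point'' map $\conf_n(\mc{X}) \to \conf_{n+1}(\mc{X})$ — the map for which injectivity is known — and makes $\bigoplus_n H_*(\conf_n(\mc{X});\mathbb{Q})$ a graded module over the ring $\bigoplus_n H_*(\conf_n(\mathbb{R}^d);\mathbb{Q})$, with the stabilisation given by multiplication by the class of a single point. The hypothesis $d \ge 2$ enters here, since it makes this ring graded-commutative, and it enters again below.

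The heart of the argument is the stability of the groups $H_k(\conf_n(\mc{X});\mathbb{Q})$, and the decisive feature of the rational, orbifold setting is that finite groups are rationally invisible: $H_*(BG;\mathbb{Q}) = \mathbb{Q}$ for finite $G$, so the rational (co)homology of $\mc{X}$, and of a local model $[\mathbb{R}^d/G]$ around an orbifold point, agrees with that of a $d$-manifold, and $\mc{X}$ satisfies Poincar\'e--Lefschetz duality over $\mathbb{Q}$. I would then stratify $\conf_n(\mc{X})$ according to how the $n$ points are distributed among the strata of $\mc{X}$. Each piece fibres, by remembering the points lying on the singular locus, over a configuration space of the singular locus with fibre a configuration space on the manifold locus, and rational triviality of the local isotropy groups identifies the associated Serre/Leray spectral sequences with those of genuine manifold configuration spaces. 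Plugging this into the known rational homological stability theorems for configuration spaces of manifolds of dimension $\ge 2$, open or closed, whose proofs use only Poincar\'e duality and the Serre spectral sequence of the evaluation map $\pconf_{n+1}(\mc{X}) \to \mc{X}$ — both of which survive rationally for orbifolds — gives that each stratum contributes a fixed amount to $H_k$ once $n \ge 2k$, hence that $\dim_{\mathbb{Q}} H_k(\conf_n(\mc{X});\mathbb{Q})$ is eventually constant. Alternatively one can avoid the stratification altogether and quote a B\"odigheimer--Cohen--Taylor / F\'elix--Thomas / Knudsen-type model computing $H_*(\conf_n(\mc{X});\mathbb{Q})$ from the rational cochain algebra of $\mc{X}$ and its duality pairing, then read off the stable range — essentially Church's argument, transported to orbifolds via the rational-triviality observation (equivalently, via finite generation of the FI-modules $n \mapsto H_k(\pconf_n(\mc{X});\mathbb{Q})$).

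Putting it together: for $k \le n/2$ the stabilisation map is injective by Bailes' theorem between finite-dimensional $\mathbb{Q}$-vector spaces of equal dimension by the previous step, hence an isomorphism. I expect the main obstacle to be exactly the behaviour of $\conf_n(\mc{X})$ near the singular locus. A configuration point may coincide with an orbifold point of isotropy $G$, and the braiding of nearby points around it genuinely sees $G$; it is therefore not a priori clear that no new rational classes arise that fail to stabilise. Making the stratification and its local-to-global spectral sequence precise enough to match the manifold case, compatibly with the gluing of the strata, is where the real work of the proof lies.
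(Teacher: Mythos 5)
Your reduction ``injective stabilisation map $+$ equal finite dimensions $\Rightarrow$ isomorphism'' cannot get started in the case the theorem is really about, namely closed $\mc{X}$: there is no stabilisation map $\conf_n(\mc{X}) \to \conf_{n+1}(\mc{X})$ there. Your proposed construction --- pick a chart $\mathbb{R}^d$ in the regular locus and push in a point --- requires a self-embedding of $\mc{X}$ isotopic to the identity whose image misses a point, and for a closed orbifold every such self-embedding is surjective (its image is open by invariance of domain and closed by compactness). The paper's own introduction flags this obstruction via $H_1(\conf_n(S^2);\mathbb{Z}) \cong \mathbb{Z}/(2n-2)\mathbb{Z}$. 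The paper instead proves the closed case with transfer maps: the forgetful map $\conf_{n,n-1}(\mc{X}) \to \conf_n(\mc{X})$ is homotopic to a finite cover (\cref{prop: cover}), Dold's lemma shows $t_{n+1}\circ s_n$ is rationally invertible so the transfer is an isomorphism in the stable range for \emph{open} orbifolds (\cref{prop: rational inverse}), and a semisimplicial resolution $\conf_n(\mc{X})_\bullet$ fibering over ordered configurations of non-orbifold points, with fibres $\conf_n(\mc{X}_{\underline{p}})$ over a punctured (hence open) orbifold, transports this to closed $\mc{X}$ (\cref{lem: microfibration}, \cref{thm: stability closed}). None of this machinery appears in your outline, and without it the injectivity input from Bailes' thesis has nothing to be injective \emph{into}.

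The other half of your argument --- that $\dim_{\mathbb{Q}} H_k(\conf_n(\mc{X});\mathbb{Q})$ is constant for $n \geq 2k$ --- is not an ``ingredient'' but the entire content of the theorem, and your sketch of it defers precisely the step where that content lives: you write that making the stratification and its local-to-global spectral sequence precise, ``compatibly with the gluing of the strata, is where the real work of the proof lies.'' That is correct, and it means the proposal does not contain a proof. For what it is worth, your instinct about the stratification is close to the paper's: it filters $\conf_n(\mc{X})$ by the number $m$ of configuration points lying on the orbifold locus, identifies the stratum (rationally) with $\conf_{n-m}(X^b) \times \conf_m(\mc{X}-X)$ --- which stabilizes because the singular factor is \emph{independent of $n$}, dissolving your worry about braiding around isotropy --- and glues the strata with a spectral sequence in compactly supported cohomology with orientation local systems (the Kupers--Miller--Tran technique), using twisted Poincar\'e duality since $\conf_n(\mc{X})$ is only a rational homology manifold and need not be orientable. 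Your alternative route via B\"odigheimer--Cohen--Taylor or FI-module finite generation would also need to be established for orbifolds, not merely quoted.
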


In \cref{sec: orbifolds}, we will define the configuration space of an orbifold. We first describe the situation for manifolds from which this note has drawn inspiration.

The (unordered) configuration space of a manifold $X$ is
	\[ \conf_n(X) := \{ (x_1, \ldots, x_n) \in (\imt{X})^{\times n} \st x_i \neq x_j \mbox{ for } i \neq j \} / \sym_n, \]
where the symmetric group $\sym_n$ acts on $n$-tuples by $\sigma \cdot (x_1, \ldots, x_n) = (x_{\sigma(1)}, \ldots, x_{\sigma(n)})$. Of particular interest to this paper is a property that these spaces satisfy known as homological stability. Given a connected manifold that is the interior of a manifold with boundary, McDuff \cite{mcduff75} defines a map $\conf_n(X) \to \conf_{n+1}(X)$ by adding a so called point ``at infinity". This map induces a map in homology from $H_k(\conf_n(X))$ to $H_k(\conf_{n+1}(X))$ which is an isomorphism in a range $k \leq n/2$ \cite{mcduff75, segal79}. This phenomenon is known as \emph{homological stability}.

For the configuration space of a closed manifold, one cannot define the stabilisation map of McDuff since there is no point at infinity from which to bring in a new point. Indeed, it can be computed from \cite{fvb62}, that $H_1(\conf_n(S^2);\mathbb{Z}) \cong \mathbb{Z}/(2n-2)\mathbb{Z}$. In particular, the dependence on $n$ means that homological stability does not hold. However, stability results for configuration spaces of closed manifolds with rational coefficients exist (see e.g. \cite{orw13, church12}).

\cref{thm: main} generalises homological stability for configuration spaces of manifolds to configuration spaces of orbifolds.

\subsection{Outline of proof}
The proof of \cref{thm: main} is broken into two parts. We first prove that stability holds in the case when $\mc{X}$ is the interior of an orbifold with boundary, so that we can define a stabilisation map. The proof uses stability for the configuration space of a manifold. The main idea is to decompose $\conf_n(\mc{X})$ into strata with a fixed number of configurations that are on orbifold points of $\mc{X}$. By fixing the number of orbifold points, we notice that the strata behave similarly to that of configurations on a manifold and so we can prove stability for the strata. We can then package everything in the form a spectral sequence in compactly supported cohomology to prove stability for $\conf_n(\mc{X})$. The technique is similar to that used in \cite{kmt15}. 

Lastly, to prove stability for closed manifolds, we employ a standard technique we learnt from \cite{orw13} involving transfer maps.

\subsection{Outline of paper}
In \cref{sec: orbifolds} we define the configuration space of an orbifold. In \cref{sec: filtration} we define a filtration on these configuration spaces. In \cref{sec: maps} we describe the stabilisation maps between configuration spaces. In \cref{sec: stability} we prove homological stability for the case of open orbifolds and in \cref{sec: closed} we prove it for arbitrary orbifolds. We have also included a brief appendix on when certain maps are covering maps.

\subsection*{Acknowledgements}
We would like to thank Dev Sinha and Craig Westerland for many useful discussions relating to this project. We also thank Craig Westerland for reading an earlier draft of this paper.  The first author was supported by the \emph{Albert Shimmins Memorial Fund} during the preparation of this work. The second author was partially supported by the \emph{Alf van der Poorten Travelling Fellowship}. 


\section{Orbifolds} \label{sec: orbifolds}
In this section we define the configuration space of an orbifold. For a more detailed introduction to orbifolds, see \cite{alr07}. Recall that a \emph{groupoid} is a category in which every morphism is an isomorphism. It is common to call the morphisms arrows. Given an orbifold $\mathcal{X}$, we will denote the objects and arrows of $\mc{X}$ by $ob( \mc{X} )$ and $ar(\mc{X})$ respectively. It is also common to use the notation $\mc{X}_0$ for objects and $\mc{X}_1$ for arrows.

A \emph{topological groupoid} is a groupoid such that the objects and the arrows each have a topology. A topological groupoid is \emph{proper} if its $(\text{source},\text{target})$ map: 
	\[ (s,t): ar{ (\mc{X}) } \to ob(\mc{X}) \times ob(\mc{X}) \]
is a proper map. For a topological groupoid $\mathcal{G}$ the \emph{coarse space} of $\mathcal{G}$ is the topological space,
	\begin{align*}
		| \mathcal{G} | := ob(\mathcal{G}) / \sim,
	\end{align*}
	where $x \sim y$ if there is an arrow $x \rightarrow y$ in $ar(\mathcal{G})$.

	An \emph{orbifold structure} on a paracompact Hausdorff space $X$ consists of a topological groupoid $\mathcal{G}$ and a homeomorphism $f : | \mathcal{G} | \rightarrow X$ such that
	\begin{enumerate}
		\item the object space and the arrow space of $\mathcal{G}$ are smooth manifolds.
		\item $\mathcal{G}$ is proper; and
		\item the source and target maps of $\mathcal{G}$ are local diffeomorphisms;
	\end{enumerate}

	An \emph{orbifold}, $\mathcal{X}$, is a topological space, $X$, equipped with an orbifold structure. An \emph{oribifold with boundary} can be defined similarly, where the third condition becomes that the object and arrow spaces are manifolds with boundary. A result of this definition is that $s\circ t^{-1}(\del X) \subset \del X$, where $\del X$ is the boundary of $X$.

In the sequel we will say \emph{let $\mathcal{X}$ be an orbifold} to refer to the structure groupoid ($\mathcal{G}$ above).
The corresponding topological space ($X$ above) can then simply be thought of as being equal to $| \mathcal{X} |$ with homeomorphism $id : | \mathcal{X} | \rightarrow X$. By a \emph{point} on a orbifold, we will mean a point in $ob(\mc{X})$. Given a point $x \in \mc{X}$, a \emph{ghost point} of $x$ is another (distinct) point $y \in \mc{X}$ such that  there is an arrow $(x \to y) \in ar(\mc{X})$.
The \emph{isotropy group} of a point $x \in ob(\mathcal{X})$ is the group $\{ g : x \rightarrow x \, | \, g \in ar(\mathcal{X}) \}$.
We will call a point $x \in ob(\mathcal{X})$ an \emph{orbifold point} if its isotropy group is non-trivial.


The main object of study in this paper is the configuration space of an orbifold.

\begin{definition}
	Let $\mathcal{X}$ be an orbifold (possibly with boundary).
	The \emph{configuration space of $\mathcal{X}$ of $n$-points}, denoted $\conf_n(\mc{X})$ is the orbifold with
	\begin{itemize}
		\item object space
	\[ ob(\conf_n(\mc{X})) := \frac{\{ (x_1, \ldots, x_n) \in \imt (\mc{X}_0)^{\times n} \st \mbox{if } i \neq j \mbox{ then there does not exist } x_i \to x_j \mbox{ in } \mc{X}_1 \}}{ \sym_n}; \]
		\item arrow space being all possible choices of $n$ arrows which take the unordered set of points $\{ x_1, \ldots, x_n \}$ to $\{ x_1', \ldots, x_n' \}$.
			Each $x_i$ must be the source of one of these arrows, each $x_j'$ must be the target of one of these arrows.
	\end{itemize}
\end{definition}

\begin{remark} In the definition, we could also have taken the object space to be the set of ordered $n$-tuples of points in $\mc{X}_0$. Then arrows $(x_1, \ldots, x_n) \to (x_1', \ldots, x_n')$ consist of a permutation $\sigma \in \sym_n$ and arrows $\alpha_i: x_i \to x'_{\sigma(i)} \in \mc{X}_1$. We will make use of this second description in \cref{sec: cover}.
\end{remark}

There is also an ordered version of the configuration space, denoted $\pconf_n(\mc{X})$, which is defined similarly where we do not quotient by the symmetric group action. While the focus of this paper will be on the unordered case, ordered configuration spaces will make an appearance in \cref{sec: closed} when we work with closed orbifolds.

Denote by $B\mc{X}$ the classifying space of $\mc{X}$. By classifying space, we mean the geometric realisation of the nerve of the category $\mc{X}$. We define the (singular) homology of $\mc{X}$, denoted $H_*(\mc{X})$, to be the homology of its classifying space $H_*(B\mc{X})$. We similarly define cohomology and other homotopy invariants of a topological nature for $\mc{X}$. Note that if we are interested in homology with rational coefficients, we have that $H_*(B\mc{X} ; \mathbb{Q}) \cong H_*(|\mc{X}| ; \mathbb{Q})$ (see e.g., Section 1.4 of \cite{alr07}) or Section 4.4 of \cite{moerdijk02}).
We will also make use of cohomology with compact supports. In this instance, we define $H^*_c(\mc{X}; \mathbb{Q}) := H^*_c(|\mc{X}| ; \mathbb{Q})$, (see Chapter 2 of \cite{alr07}) for a more detailed discussion on cohomology with compact supports in terms of De Rham cohomology.
%

\section{A filtration} \label{sec: filtration}
In this section, we will describe a filtration of $\conf_n(\mc{X})$ by the number of orbifold points in a configuration.
Let $\fonf_{n,m}(\mc{X})$ be the suborbispace of $\conf_{n}(\mc{X})$ consisting of configurations with exactly $m$ orbifold points. Collecting these together, we define
 	\[ F_p = \cup_{i=0}^p \fonf_{n,p}(\mc{X}). \]
The suborbispaces $F_p$ form an open increasing filtration of orbifolds such that
	\[ \emptyset = F_{-1} \subset F_0 \subset \cdots F_n = \conf_n(\mc{X}). \]
The suborbispaces $\fonf_{n,p}(\mc{X})$ are therefore given by the filtration differences $F_p - F_{p-1}$.

On the classifying space level, the classifying spaces $BF_p$ form a filtration of $B\conf_n(\mc{X})$ so that the collection over all $p$ of the $BF_p$ give an increasing filtration of $B\conf_n(\mc{X})$ by open subspaces. We will often abuse notation by using $F_p$ to denote both the filtration on classifying spaces and the filtration on the orbifold.

Let $\mc{X}'$ be the orbifold $\mc{X}$ with its orbifold points removed. Therefore $ob(\mc{X}')$ is a manifold that is possibly disconnected. Consider a single component $\mc{X}'^b$ of $\mc{X}$. Let $\donf_{n,m}(\mc{X})$ be the suborbispace of $C_{n,m}(\mc{X})$ where the non-orbifold points of the configuration all lie in $\mc{X}'^b$.

\begin{proposition}\label{prop: fonf is donf} If $\mc{X}$ is a connected orbifold, then the inclusion $i: \donf_{n,m}(\mc{X}) \to \fonf_{n,m}(\mc{X})$ induces an isomorphism on rational homology. That is 
\[i_*: H_*(\donf_{n,m}(\mc{X}; \mathbb{Q})) \cong H_*(\fonf_{n,m}; \mathbb{Q}).\]
\end{proposition}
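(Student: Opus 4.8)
The plan is to exhibit $\fonf_{n,m}(\mc{X})$ as a space built from the various $\donf_{n,m}(\mc{X})$ attached along configurations supported on the (disconnected) manifold $\mc{X}'$, and then argue that only the "diagonal" component $\donf_{n,m}(\mc{X})$ coming from a single component $\mc{X}'^b$ contributes to rational homology. More precisely, fix the set of orbifold points occupied by a configuration together with their multiplicities; since $\mc{X}$ is connected as a coarse space but $ob(\mc{X}')$ may split into components $\mc{X}'^1, \dots, \mc{X}'^r$, a point of $\fonf_{n,m}(\mc{X})$ records $m$ of its points sitting on orbifold points and the remaining $n-m$ points distributed among the components of $\mc{X}'$. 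Thus $\fonf_{n,m}(\mc{X})$ decomposes, up to the $\sym_n$-action, as a union indexed by how the $n-m$ non-orbifold points are partitioned among the components $\mc{X}'^b$, and $\donf_{n,m}(\mc{X})$ is precisely the piece where all $n-m$ non-orbifold points lie in a single fixed component.

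First I would make this decomposition precise on the level of orbifolds: there is a homeomorphism of coarse spaces
\[ |\fonf_{n,m}(\mc{X})| \;\cong\; \coprod_{\substack{n_1 + \cdots + n_r = n-m}} \Big( |\donf_{n_1 + m, m}^{(1)}(\mc{X})| \times \cdots \times |\donf_{n_r+m,m}^{(r)}(\mc{X})| \Big) \big/ (\text{identifications}), \]
or, more cleanly, I would realise $\fonf_{n,m}(\mc{X})$ as a bar-type / homotopy-colimit construction whose pieces are products of $\donf$'s over the components $\mc{X}'^b$. The key homotopy-theoretic input is then that configuration spaces of a connected manifold of dimension $\ge 2$ are "homologically connected" in the relevant sense — i.e. the inclusion of a single component's configurations into the configurations allowing points in any component induces a rational homology iso because the extra components, being non-compact handles attached to a connected orbifold, can be absorbed. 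Concretely, if $\mc{X}'^b$ and $\mc{X}'^{b'}$ abut the same orbifold point, one can slide configuration points between them through that orbifold point, giving rational-homology equivalences between the various pieces. Since $\mc{X}$ is connected, all components of $\mc{X}'$ are linked through orbifold points, and a Mayer–Vietoris / spectral sequence argument collapses the decomposition down to a single $\donf_{n,m}(\mc{X})$ up to rational homology.

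I would organise the argument as follows. Step 1: set up the stratification of $\fonf_{n,m}(\mc{X})$ by "which orbifold points are hit and with what multiplicities," reducing to the open stratum where the $m$ orbifold-supported points are at fixed, distinct orbifold points (the strata with coincidences or varying choices of orbifold points contribute via a spectral sequence whose columns are of the same form but with smaller $n$, handled by induction). Step 2: on such a stratum, the $n-m$ free points move in $\mc{X}'$, so the stratum is a bundle over $\conf_{n-m}(\mc{X}') = \coprod \prod_b \conf_{n_b}(\mc{X}'^b)$ with fibre capturing the orbifold data at the fixed points; pushing the free points toward a neighbourhood of the orbifold points (which is connected since $\mc{X}$ is) deformation-retracts, up to rational homology, the disconnected base onto the piece where all free points lie in one component, which is exactly $\donf$. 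Step 3: assemble the strata via the compactly-supported-cohomology spectral sequence (as used later in the paper) and conclude $i_*$ is an isomorphism by comparing the two spectral sequences (for $\donf$ and $\fonf$) levelwise.

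The main obstacle I anticipate is Step 2: making rigorous the claim that moving configuration points between different components of $\mc{X}'$ through a shared orbifold point is a rational-homology equivalence, and in particular handling the isotropy group of the orbifold point, which acts on the local configuration data and contributes group homology — one must check this contribution is the same whether or not one allows points to escape into the other components, which is where the hypothesis $\dim \mc{X} \ge 2$ (so that local configuration spaces around a point are connected and have controlled fundamental group) and rational coefficients (killing the finite isotropy in the coarse-space homology, via $H_*(B\mc{X};\mathbb{Q}) \cong H_*(|\mc{X}|;\mathbb{Q})$) are both essential. I would expect to phrase the cleanest version of this as: the inclusion of coarse spaces $|\donf_{n,m}(\mc{X})| \hookrightarrow |\fonf_{n,m}(\mc{X})|$ is a homotopy equivalence after one enlarges the free points' allowed region to a connected open neighbourhood of $\mc{X}$'s orbifold locus, together with the fact that such an enlargement does not change the homotopy type of configuration spaces of a connected manifold.
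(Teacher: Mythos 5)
Your plan is far more elaborate than what is needed, and its load-bearing step is exactly the one you flag as an obstacle and do not resolve. In Step 2 you want to show that letting the $n-m$ non-orbifold points wander among the several components of $ob(\mc{X}')$, rather than confining them to the single component $\mc{X}'^b$, does not change rational homology, and you propose to do this by sliding configuration points between components through a shared orbifold point. This does not work as stated: a path in $\fonf_{n,m}(\mc{X})$ must keep exactly $m$ points on the orbifold locus, so a non-orbifold point cannot pass through an orbifold point while remaining in the stratum; and the assertion that the resulting comparison is a rational homology equivalence is essentially the proposition itself, so the argument is circular at the crucial moment. The stratification by which orbifold points are hit, the induction, and the compactly-supported spectral sequences of Steps 1 and 3 are machinery wrapped around this unproved core.

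The idea you are missing is that no homotopy-theoretic or Mayer--Vietoris argument is needed at all: the two groupoids $\donf_{n,m}(\mc{X})$ and $\fonf_{n,m}(\mc{X})$ have the \emph{same coarse space}. The components of $ob(\mc{X}')$ are components of the object manifold of the groupoid, not of the underlying space $|\mc{X}'|$; since $\mc{X}$ is connected, a configuration point lying in some other component is joined by an arrow of $\mc{X}_1$ to a ghost point in $\mc{X}'^b$, so every object of $\fonf_{n,m}(\mc{X})$ is isomorphic in the groupoid to one all of whose non-orbifold points lie in $\mc{X}'^b$. Hence $|i| : |\donf_{n,m}(\mc{X})| \to |\fonf_{n,m}(\mc{X})|$ is the identity map. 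Combining this with the fact (which you do correctly identify as relevant) that $H_*(B\mc{G};\mathbb{Q}) \cong H_*(|\mc{G}|;\mathbb{Q})$ for an orbifold groupoid $\mc{G}$, applied to both $\donf_{n,m}(\mc{X})$ and $\fonf_{n,m}(\mc{X})$, yields the isomorphism by a one-line diagram chase. In short, the discrepancy between $\donf$ and $\fonf$ is an artefact of the groupoid presentation and disappears already at the level of coarse spaces; rational coefficients enter only to pass from the classifying space to the coarse space, not to collapse any genuinely larger space.
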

\begin{proof} 
Consider the diagram

\[ \xymatrix{ 
		\donf_{n,m}(\mc{X}) \ar[r]^i  \ar[drr]_{q'} & \fonf_{n,m}(\mc{X}) \ar[r]^q 	&	|\fonf_{n,m}(\mc{X})| \\
							&						&	|\donf_{n,m}(\mc{X})| \ar[u]_{|i|}
		} \]
where $i$ is the inclusion $\donf_{n,m}(\mc{X}) \to \fonf_{n,m}(\mc{X})$. We have that $q$ and $q'$ are  rational homology equivalences (see e.g., Section 1.4 of \cite{alr07} or Section 4.4 of \cite{moerdijk02}). Moreover, because $\mc{X}$ is connected $|\fonf_{n,m}(\mc{X})|$ is connected and $|\fonf_{n,m}(\mc{X})| = |\donf_{n,m}(\mc{X})|$ and $|i|$ is the identity. The diagram commutes and so $i$ is a rational homology equivalence.
\end{proof}

\section{Stabilisation and transfer maps} \label{sec: maps}
%

%
\subsection{The stabilisation map} Let $\mc{X}$ be a connected orbifold of dimension $\geq 2$ that is the interior of a orbifold $\overline{\mc{X}}$ with boundary. Let $\mathcal{U}$ be the interior of a closed suborbifold of $\partial \overline{\mathcal{X}}$.
Define
\begin{align*}
	\mathcal{V} := \frac{\overline{\mathcal{U}} \times I}{\text{if } u \in ob(\partial \overline{\mathcal{U}}) \text{ and } t, t' \in I \text{ then } (u, t) \sim (u, t')}.
\end{align*}
If we assume that there is an orbifold embedding,
\begin{align*}
	j : \mathcal{V} \rightarrow \overline{\mathcal{X}},
\end{align*}
such that $j|_{\overline{\mathcal{U}} \times \{ 0 \}} : \overline{\mathcal{U}} \hookrightarrow \overline{\mathcal{X}}$ is the standard inclusion, then it is possible to define a stabilisation map. Let $\varepsilon$ be a point on the boundary of $\mc{X}$. We will define a map 
\begin{align*}
	\stab_\varepsilon : Conf_n \mathcal{X} \rightarrow Conf_{n + 1} \mathcal{X}
\end{align*}
that takes a configuration on $\mathcal{X}$ and adds a point to the configuration near $\varepsilon \in \mathcal{U}$. For brevity, we call this map $\stab$.

Using the map $j$, we define a map $k$ which pushes points away from the boundary of $\overline{\mathcal{X}}$, giving a place to add to the configuration.
The map $k$ is defined,
\begin{align*}
		k : \overline{\mathcal{X}} &\rightarrow \overline{\mathcal{X}},\\
	x &\mapsto \left\{
		\begin{array}[]{l l}
			x, & \text{if } x \in \overline{\mathcal{X}} \setminus j(\mathcal{U} \times I),\\
			j(\widehat{k} (j^{-1} (x))), & \text{if } x \in j(\mathcal{U} \times I),
		\end{array}
		\right.
\end{align*}
where,
\begin{align*}
	\widehat{k} : \mathcal{V} = (\overline{\mathcal{U}} \times I) / \sim &\rightarrow \mathcal{V},\\
	(d, t) &\mapsto (d, \tfrac{1}{2} + \tfrac{t}{2}).
\end{align*}
This map is isotopic to the identity. It pushes points into the bottom `half' of the collar around $\mathcal{U}$.
The image under $k$ of any point in $\mathcal{U}$ will end up in the interior of $\overline{\mathcal{X}}$.

Adding a point to a configuration in $Conf_n(\mathcal{X})$ is a two-step process,
\begin{enumerate}
	\item add a point from $\mathcal{U}$ to the configuration, giving $n + 1$ points in $\mathcal{X} \cup \mathcal{U}$; followed by
	\item apply $k$ to push all points into $\mathcal{X}$, giving an element of $Conf_{n + 1} (\mathcal{X})$.
\end{enumerate}
Precisely, for a point $\varepsilon \in \mathcal{U}$ we make the following definition.
\begin{definition} The \emph{stabilisation map} at $\varepsilon$ is the map,
\begin{align*}
	\stab : Conf_n (\mathcal{X}) &\rightarrow Conf_{n + 1} (\mathcal{X}),\\
	\underline{x} &\mapsto k (\underline{x} \cup \{ \varepsilon \}).
\end{align*}
\end{definition}

\subsection{Stabilisation map for compactly supported cohomology}

When working with compactly supported cohomology, we will want a stabilisation map that is an open embedding, since compactly supported cohomology is covariant with respect to such maps. This requires a simple modification of the above map. Note that both stabilisation maps will induce the same map on homology.

Let $\mc{X}$ be the interior of an orbifold $\overline{\mc{X}}$ with boundary. Pick an embedding (as orbifolds) of a $d-1$ dimensional disk $\varphi: D^{d-1} \to \del \overline{\mc{X}}$, where $d$ is the dimension of $\mc{X}$.  

\begin{remark} It is always possible to pick an embedding $\varphi$, if the set of orbifold points on $\del(\mc{X})$ were codimension at least 1 (in $\del(\mc{X})$). In general, this might not be the case. When talking about the stabilisation map in compactly supported cohomology, we will restrict to orbifolds where such an embedding exists. This will not affect the generality of our result as eventually, our proof in the closed case (\cref{thm: stability closed}) will cover all connected orbifolds of dimension greater than 2.
\end{remark}


Fix a diffeomorphism
	\[ \psi : \mathrm{int}(\overline{\mc{X}}\cup_\varphi D^{d-1} \times [0,1)) \to \mc{X}. \]
Define a map
	\[ \stab: \conf_n(\mc{X}) \times \mathbb{R}^n \to \conf_{n+1}(\overline{\mc{X}}\cup_\varphi D^{d-1} \times [0,1)) \to \conf_n(\overline{\mc{X}})\]
by: for $\underline{x} \in \conf_n(\mc{X})$ and $y \in \mathbb{R}^n \cong D^{d-1} \times (0,1)$, send $(\underline{x},y)$ to $(\psi(\underline{x}) , (\psi \circ \varphi) (y))$. Here $\psi(\underline{x})$ is the map the does $\psi$ to each point of $x$.



\subsection{The transfer map} We also want to define the notion of a transfer map which can be intuitively thought of as `all possible liftings to a covering space', but in homology. When dealing with closed orbifolds, we will need to use the transfer map to go between configurations with different numbers of points. This is because the stabilisation map is no longer defined as there is no boundary with which to push in a new point.

We first recall the transfer map for topological coverings. Given a degree $d$ covering of topological spaces, $p : X \rightarrow Y$ there is an induced map in homology, $p_* : H_* X \rightarrow H_* Y$.
The transfer map, however, is a map in the opposite direction,
\begin{align*}
	p^! : H_* Y \rightarrow H_* X.
\end{align*}
The transfer map has the property that $p_* \circ p^!$ is multiplication by $d$.

To apply this idea to a map of orbifolds, $p : \mathcal{X} \rightarrow \mathcal{Y}$, one needs to check that $Bp : B \mathcal{X} \rightarrow B \mathcal{Y}$ is homotopic to a finite sheeted cover. We will want to apply this construction to the configuration space of an orbifold.

For $m < n$, let $\conf_{n,m}(\mc{X})$ be the configuration space of $\mc{X}$ where the $n$ configuration points have been partitioned into subcollections of size $m$ and $n-m$. There is a map
	\[ p : \conf_{n,m}(\mc{X}) \to \conf_n(\mc{X}) \]
obtained by forgetting all the subcollections. In his thesis, Bailes shows that this map is homotopic to a covering map \cite[Chapter 3]{bailes15}. We also summarise the main points in \cref{sec: cover}.
\begin{definition}
Define a collection of maps
	\[ t_{n,m} : H_*(\conf_n(\mc{X})) \to H_*(\conf_{m}(\mc{X})) \]
which are obtained by composing the transfer map $p^!$ with the induced map of $\conf_{n,m}(\mc{X}) \to \conf_m(\mc{X})$ which only remembers the subcollection of $m$ points ($t_{n,m} := forget \circ p^!$). 
\end{definition}
Note that we are slightly abusing language by calling $t_{n,m}$ the \emph{transfer map}. By the transfer map $t$, we will mean $t_{n,n-1}$.

\section{Homological stability} \label{sec: stability}
In this section, we prove that homological stability holds for configuration spaces of open orbifolds. We will use techniques similar to those found in \cite{kmt15}.

In order to prove homological stability, we first prove that stability holds for the filtration differences of the filtration in \cref{sec: filtration}. We then use a spectral sequence in compactly supported cohomology associated to the filtration to conclude that stability holds for the whole space.


Cohomology with compact supports will be another important tool. Observe that if $\conf_n(\mc{X})$ is orientable, then Poincar\'{e} duality for orbifolds gives an isomorphism of the form
	\[ H_*(\conf_n(\mc{X}); \mathbb{Q}) \cong H^{nd-*}_c(\conf_n(\mc{X}); \mathbb{Q})\]
where $\dim(\mc{X}) = d$. Therefore, proving homological stability in the range $* \leq n/2$ is the same as proving \[H^*_c(\conf_n(\mc{X}) ; \mathbb{Q}) \cong H^{*+d}_c(\conf_{n+1}(\mc{X}) ; \mathbb{Q}) \] for $* \geq nd - n/2$.

One caveat for us will be that $\conf_n(\mc{X})$ will not always be orientable. Specifically, $\conf_n(\mc{X})$ is not orientable when $d$ is odd or if $\mc{X}$ is not orientable. In this case, we will need to use a twisted version of Poincar\'{e} duality, involving orientation local systems. Noting that orbifolds are rational homology manifolds, (see e.g., \cite[V.9.2]{bredon97}) we have the following.
\begin{proposition} There is a rational orientation local system $\mc{O}$ on $\conf_n(\mc{X})$ such that:
if $\mc{L}$ is a one dimensional locally constant rational local system, then
	\[ H_*(\conf_n(\mc{X}) ; \mc{L} ) \cong H_c^{n\dim(\mc{X}) - *}(\conf_n(\mc{X}) ; \mc{O} \otimes \mc{L}). \]
Similarly, we have
	\[ H_*(\conf_n(\mc{X}) ; \mc{O} \otimes \mc{L} ) \cong H_c^{n\dim(\mc{X}) - *}(\conf_n(\mc{X}) ; \mc{L}) \]
 by replacing $\mc{L}$ with $\mc{O} \otimes \mc{L}$.
\end{proposition}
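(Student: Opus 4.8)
The plan is to reduce both sides to the coarse space $M := |\conf_n(\mc{X})|$ and then apply Poincar\'e duality for homology manifolds. Since all coefficients are rational, by the conventions fixed above $H_*(\conf_n(\mc{X}); -)$ and $H^*_c(\conf_n(\mc{X}); -)$ are, by definition, the corresponding (compactly supported) homology and cohomology of $M$, and the local systems in the statement are local systems on $M$; so it is enough to prove the two displayed isomorphisms with $\conf_n(\mc{X})$ replaced by $M$.

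First I would observe that $\conf_n(\mc{X})$ is an orbifold of dimension $m := n\dim(\mc{X})$, its object space being an open submanifold of $(\mc{X}_0)^{\times n}$ on which $\sym_n$ acts freely. The coarse space of any orbifold is locally modelled on a quotient $\mathbb{R}^m/G$ with $G$ finite, and such a quotient is a rational homology manifold; this is precisely the statement, already used above, that orbifolds are rational homology manifolds \cite[V.9.2]{bredon97}, the essential input being the transfer isomorphism $H_*(\mathbb{R}^m/G; \mathbb{Q}) \cong H_*(\mathbb{R}^m; \mathbb{Q})^G$, which is why rational coefficients are needed. Hence $M$ is a rational homology $m$-manifold: for every $x \in M$ the local homology $H_k(M, M\setminus\{x\}; \mathbb{Q})$ is $\mathbb{Q}$ when $k = m$ and vanishes otherwise.

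Next I would introduce the orientation local system $\mc{O}$ of $M$, the rank-one local system of $\mathbb{Q}$-vector spaces with stalks $\mc{O}_x = H_m(M, M\setminus\{x\}; \mathbb{Q})$; since a generator of each stalk is well-defined only up to sign, its monodromy $\pi_1(M) \to \mathbb{Q}^\times$ takes values in $\{\pm 1\}$, and in particular $\mc{O}\otimes\mc{O} \cong \underline{\mathbb{Q}}$. Poincar\'e--Lefschetz duality for rational homology manifolds (see \cite[Chapter V]{bredon97}; equivalently, the dualizing complex of $M$ is $\mc{O}[m]$) provides a twisted fundamental class $[M] \in H^{\mathrm{BM}}_m(M; \mc{O})$ such that, for every rational local system $\mc{L}$ on $M$, cap product with $[M]$ is an isomorphism
\[ H^{m-k}_c(M; \mc{O}\otimes\mc{L}) \xrightarrow{\cong} H_k(M; \mc{L}). \]
Taking $m = n\dim(\mc{X})$, this is the first displayed isomorphism of the proposition. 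For the second I would substitute $\mc{O}\otimes\mc{L}$ for $\mc{L}$ and cancel the extra factor using $\mc{O}\otimes\mc{O} \cong \underline{\mathbb{Q}}$, giving $H_*(M; \mc{O}\otimes\mc{L}) \cong H^{n\dim(\mc{X}) - *}_c(M; \mc{L})$.

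The step I expect to need the most care is the verification that $M$ really is a rational homology $m$-manifold, i.e.\ the local computation at an orbifold point where $M$ looks like $\mathbb{R}^m/G$; but since this is exactly the cited fact that orbifolds are rational homology manifolds, it requires no new argument here, and everything else is the standard Poincar\'e-duality formalism for a dualizing complex that is a shift of a rank-one local system. (The hypothesis that $\mc{L}$ be one-dimensional is not actually used above; it is the case of interest in the applications, and guarantees that $\mc{O}\otimes\mc{L}$ is again one-dimensional.)
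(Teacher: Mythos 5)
Your argument is correct and is essentially the paper's own justification: the paper states this proposition without a separate proof, relying exactly on the observation that orbifolds (hence the coarse space $|\conf_n(\mc{X})|$, locally $\mathbb{R}^{n\dim(\mc{X})}/G$) are rational homology manifolds \cite[V.9.2]{bredon97}, so that twisted Poincar\'e duality with the orientation local system applies. Your write-up simply makes explicit the reduction to the coarse space, the construction of $\mc{O}$ from local homology, and the cancellation $\mc{O}\otimes\mc{O}\cong\underline{\mathbb{Q}}$ for the second isomorphism, all of which the paper leaves implicit.
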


We are mainly interested in the case when $\mc{L} = \mathbb{Q}$, the constant one dimensional rational local system. Thus if we want to compute $H_*(\conf_n(\mc{X}) ; \mathbb{Q})$ we can do this by computing $H_c^*(\conf_n(\mc{X}) ; \mc{O})$. We will need the following.

\begin{proposition} \label{prop: trivial coefficients} Let $\mc{O}$ be the orientation local system on $\conf_n(\mc{X})$. Let $i^*\mc{O}$ be the pullback local system induced by the inclusion $i : C_{n,m}(\mc{X}) \to \conf_n(\mc{X})$. Finally let $\mc{O}_n$ be the orientation local system on $C_{n,m}(\mc{X})$.

Then $i^*\mc{O} \otimes \mc{O}_n$ is a trivial local system on $C_{n,m}(\mc{X})$.
\end{proposition}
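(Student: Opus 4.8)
The plan is to turn the statement into a question about the normal structure of $C_{n,m}(\mc{X})$ inside $\conf_n(\mc{X})$, and then to use that orbifolds are rational homology manifolds. Recall that for a rational homology manifold $Y$ the orientation local system $\mc{O}_Y$ is a rank-one $\mathbb{Q}$-local system with structure group $\{\pm 1\}$, and that such systems are multiplicative: if $Y$ is locally a product $Y_1\times Y_2$ then $\mc{O}_Y\cong\mc{O}_{Y_1}\boxtimes\mc{O}_{Y_2}$, so that along an embedding $i\colon Z\hookrightarrow Y$ of $\mathbb{Q}$-homology manifolds with ``normal bundle'' $N$ (a bundle of orbifold normal cones) one has $i^*\mc{O}_Y\cong\mc{O}_Z\otimes\mc{O}_N$. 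Applying this to $i\colon C_{n,m}(\mc{X})\hookrightarrow\conf_n(\mc{X})$ and using that $\mc{O}_n$ is a $\{\pm1\}$-system, so $\mc{O}_n\otimes\mc{O}_n\cong\underline{\mathbb{Q}}$, gives
\[ i^*\mc{O}\cong\mc{O}_n\otimes\mc{O}_N,\qquad\text{hence}\qquad i^*\mc{O}\otimes\mc{O}_n\cong\mc{O}_N. \]
So it suffices to show that the normal bundle $N$ is orientable, i.e.\ that $\mc{O}_N$ is trivial.

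Next I would describe $N$ concretely. Write $\Sigma\subset\mc{X}$ for the locus of orbifold points and $\Sigma=\coprod_b\Sigma_b$ for its components, with generic isotropy $G_b$ acting on the normal directions $\nu_b$ (a genuine real vector bundle over $\Sigma_b$) with no nonzero invariant vectors. A configuration in $C_{n,m}(\mc{X})$ consists of $n-m$ points of $\mc{X}\setminus\Sigma$ together with $m$ points lying on $\Sigma$, and pushing one of the latter transversally off $\Sigma$ is exactly what moves a configuration out of $C_{n,m}(\mc{X})$; hence $N$ is the direct sum, over the $m$ orbifold points of a configuration, of the pulled-back normal cones $\nu_b/G_b$. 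The key input is that $\mc{X}$, being an orbifold, is a rational homology manifold, which forces every isotropy group to act on the ambient tangent space preserving orientation (this is precisely the condition making a chart $\mathbb{R}^d/G$ a $\mathbb{Q}$-homology manifold). Since $G_b$ acts trivially on $T\Sigma_b$, it therefore acts on $\nu_b$ by orientation-preserving maps, so the rational orientation local system of each fibre $\nu_b/G_b$ is canonically the $G_b$-invariants of that of $\nu_b$, and a local orientation of $\nu_b$ descends.

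Finally one must check that these fibrewise orientations glue to a global trivialisation of $\mc{O}_N$ over $C_{n,m}(\mc{X})$. The monodromy of $\mc{O}_N$ along a loop $\gamma$ is a product of two contributions: the transport of the chosen orientation of $\nu_b$ as the $m$ orbifold points move along $\Sigma$ — governed by the orientation system $\mc{O}_{\nu_b}$ of the normal bundle of $\Sigma_b$, which via $\mc{O}_{\mc{X}}|_{\Sigma_b}\cong\mc{O}_{\Sigma_b}\otimes\mc{O}_{\nu_b}$ measures the orientability of that bundle — and the sign incurred when $\gamma$ permutes two normal summands of equal rank inside a configuration. Controlling both of these is the delicate part of the argument, and I expect it to be where the real work lies; one will likely reduce to a single component of $\mc{X}\setminus\Sigma$ using the rational homology equivalence $\donf_{n,m}(\mc{X})\simeq\fonf_{n,m}(\mc{X})$ of \cref{prop: fonf is donf}, and then carry out an orientation/parity analysis of the normal bundles of the $\Sigma_b$. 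Everything before that — the splitting $i^*\mc{O}\cong\mc{O}_n\otimes\mc{O}_N$, the identity $\mc{O}_n^{\otimes2}\cong\underline{\mathbb{Q}}$, and the $\mathbb{Q}$-homology-manifold constraint on isotropy — is the formal skeleton onto which the analysis is hung.
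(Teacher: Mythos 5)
Your reduction is sound as far as it goes: writing $i^*\mc{O}\cong\mc{O}_n\otimes\mc{O}_N$ for the normal data $N$ of $C_{n,m}(\mc{X})$ inside $\conf_n(\mc{X})$ and using $\mc{O}_n\otimes\mc{O}_n\cong\underline{\mathbb{Q}}$ correctly converts the proposition into the assertion that $\mc{O}_N$ is trivial, and the observation that the $\mathbb{Q}$-homology-manifold condition forces each isotropy group to act orientation-preservingly on its normal slice (so each fibre $\nu_b/G_b$ is $\mathbb{Q}$-orientable) is a genuine and correct piece of input. But the proof is not complete: the global triviality of $\mc{O}_N$ --- the monodromy from transporting normal orientations around loops in the strata $\Sigma_b$, and the sign $(-1)^{\mathrm{rk}\,\nu_b}$ incurred when a loop permutes two orbifold points of the configuration --- is precisely the content of the proposition, and you explicitly defer it (``I expect it to be where the real work lies''). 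A reduction that stops where the real work begins is a gap, not a proof.

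For comparison, the paper argues by writing down the two monodromy characters directly: for a loop $\gamma$ in $C_{n,m}(\mc{X})$ the character of $i^*\mc{O}$ is $M(d(\gamma))\epsilon(p(\gamma))$ (with $M$ the orientation character of $\mc{X}$ evaluated on the sum of the $n$ strands and $\epsilon(p)$ the sign of the induced permutation), the character of $\mc{O}_n$ is $M(d(\gamma))\epsilon(p_0(\gamma))\epsilon(p_1(\gamma))$, and since orbifold and non-orbifold points cannot be interchanged along $\gamma$ one has $\epsilon(p)=\epsilon(p_0)\epsilon(p_1)$; the two $\{\pm1\}$-valued characters agree, so their tensor product is trivial. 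Your $\mc{O}_N$ is exactly the ratio of these two characters, so the frameworks coincide up to bookkeeping --- but note that the difficulties you flag (orientability of $\nu_b$ over $\Sigma_b$, and the swap sign when $\mathrm{rk}\,\nu_b$ is odd) are exactly the points at which the paper's formula for the monodromy of $\mc{O}_n$ needs justification. Completing your argument would therefore amount to verifying that formula rather than bypassing it, and until that verification is carried out the proposal establishes nothing beyond the formal skeleton you describe.
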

\begin{proof} 
We first describe the monodromy of the local system $\mc{O}$. Let $\underline{x}$ be a base point in $C_{n,m}(\mc{X})$. Let $d: \pi_1(C_{n,m}(\mc{X}), \underline{x}) \to H_1(\mc{X}, \mathbb{Z})$ be the map that adds the homology classes of the $n$ paths starting and ending at $\underline{x}$. Let $p : \pi_1(C_{n,m}(\mc{X}), \underline{x}) \to \sym_n$ be the map that remembers the permutation of the points of $\underline{x}$  given by a loop. This map requires a choice of ordering of $\underline{x}$. Let $M : H_1(\mc{X}; \mathbb{Z}) \to \mathbb{Z}/2\mathbb{Z}$ be the monodromy associated to the orientation local system of $\mc{X}$. The monodromy of a loop $\gamma$ for the local system $i^*\mc{O}$ can be described by $M(d(\gamma))\epsilon(p(\gamma))$, where $\epsilon(p(\gamma))$ is the sign of the permutation $p$.

The monodromy of $\mc{O}_n$ is similar. In this case, orbifold points and non-orbifold points cannot swap. Let $p_0$ and $p_1$ be defined similarly to $p$, except that they only remember the permutations of the non-orbifold and orbifold points respectively. The monodromy of a loop $\gamma$ in $C_{n,m}(\mc{X})$ associated to the local system $\mc{O}_n$ is given by $M(d(\gamma))\epsilon(p_0(\gamma))\epsilon(p_1(\gamma))$. 

On loops that lie in $C_{n,m}(\mc{X})$, the two monodromies agree since $\epsilon(p(\gamma)) = \epsilon(p_0(\gamma))\epsilon(p_1(\gamma))$. In particular $i^*\mc{O} \otimes \mc{O}_n$ is a trivial local system.
\end{proof}

In order to use compactly supported cohomology, we use the version of the stabilisation map of the form
	\[ \stab: \conf_n(\mc{X}) \times \mathbb{R}^d \to \conf_{n+1}(\mc{X}). \]

This map is an open embedding and so is covariant with respect to compactly supported cohomology. Note that since $\mathbb{R}^d$ is contractible, we have an isomorphism of homology groups $H_*(\conf_n(\mc{X}) \times \mathbb{R}^d) \cong H_*(\conf_n(\mc{X}))$.

The following is analogous to \cref{prop: trivial coefficients} and the proof is essentially the same.

\begin{proposition} \label{prop: trivial coefficients R} Let $\mc{O}'$ be the orientation local system on $\conf_n(\mc{X}) \times \mathbb{R}^d$. Let $(i \times id)^*\mc{O}'$ be the pullback local system induced by the inclusion $i \times id: C_{n,m}(\mc{X}) \times \mathbb{R}^d \to \conf_n(\mc{X}) \times \mathbb{R}^d$. Finally let $\mc{O}_n'$ be the orientation local system on $C_{n,m}(\mc{X}) \times \mathbb{R}^d$.

Then $(i\times id)^*\mc{O}' \otimes \mc{O}_n'$ is a trivial local system on $C_{n,m}(\mc{X}) \times \mathbb{R}^d$.
\end{proposition}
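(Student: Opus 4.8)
The plan is to reduce \cref{prop: trivial coefficients R} directly to \cref{prop: trivial coefficients}, since the extra factor of $\mathbb{R}^d$ is orientable and contributes nothing to monodromy. First I would record the general fact that for a product $A \times B$ with $B$ oriented (here $B = \mathbb{R}^d$), the orientation local system splits as a pullback: $\mc{O}_{A\times B} \cong \mathrm{pr}_A^* \mc{O}_A \otimes \mathrm{pr}_B^* \mc{O}_B$, and when $B$ is orientable the second factor is trivial, so $\mc{O}_{A\times B} \cong \mathrm{pr}_A^*\mc{O}_A$. Applying this with $A = \conf_n(\mc{X})$ gives $\mc{O}' \cong \mathrm{pr}^* \mc{O}$, and with $A = C_{n,m}(\mc{X})$ gives $\mc{O}_n' \cong \mathrm{pr}^*\mc{O}_n$, where $\mathrm{pr}$ denotes the relevant projection forgetting the $\mathbb{R}^d$ coordinate.

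Next I would check naturality: the inclusion $i \times \mathrm{id}$ commutes with the projections, i.e. $\mathrm{pr} \circ (i \times \mathrm{id}) = i \circ \mathrm{pr}$, so that $(i\times \mathrm{id})^* \mc{O}' \cong (i \times \mathrm{id})^* \mathrm{pr}^* \mc{O} \cong \mathrm{pr}^* i^* \mc{O}$. Combining these identifications,
\[ (i\times \mathrm{id})^* \mc{O}' \otimes \mc{O}_n' \cong \mathrm{pr}^*(i^*\mc{O}) \otimes \mathrm{pr}^*(\mc{O}_n) \cong \mathrm{pr}^*(i^*\mc{O} \otimes \mc{O}_n). \]
By \cref{prop: trivial coefficients}, $i^*\mc{O} \otimes \mc{O}_n$ is trivial on $C_{n,m}(\mc{X})$, and the pullback of a trivial local system along any map is trivial, so the left-hand side is trivial on $C_{n,m}(\mc{X}) \times \mathbb{R}^d$, as claimed.

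Alternatively — and this is probably the cleaner write-up given that the statement says the proof is "essentially the same" — I would simply repeat the monodromy computation from \cref{prop: trivial coefficients}. Since $\mathbb{R}^d$ is simply connected, $\pi_1(C_{n,m}(\mc{X}) \times \mathbb{R}^d, (\underline{x}, 0)) \cong \pi_1(C_{n,m}(\mc{X}), \underline{x})$, and the maps $d$, $p_0$, $p_1$, $M$ and the orientation monodromies of both $\mc{O}'$ and $\mc{O}_n'$ factor through this isomorphism exactly as before; then the identity $\epsilon(p(\gamma)) = \epsilon(p_0(\gamma))\epsilon(p_1(\gamma))$ gives triviality of the tensor product verbatim.

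The only point requiring genuine care — the "main obstacle," though it is minor — is justifying the splitting of the orientation local system of a product and its naturality in the orbifold (equivalently, rational homology manifold) setting rather than the smooth manifold setting; this follows since the orientation sheaf is defined via local (compactly supported, or Borel--Moore) cohomology which is multiplicative for products of rational homology manifolds, and $\conf_n(\mc{X}) \times \mathbb{R}^d$ is again a rational homology manifold. Once that is in hand the rest is formal.
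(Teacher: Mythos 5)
Your proposal is correct and matches the paper, which gives no separate argument here beyond stating that the proof is ``essentially the same'' as that of \cref{prop: trivial coefficients}; your second alternative (repeating the monodromy computation using that $\mathbb{R}^d$ is simply connected) is exactly that intended argument, and your first alternative (pulling back along the projection) is a clean formal packaging of the same fact.
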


Compactly supported cohomology is functorial with respect to open embeddings so there is an induced map
	\[ \stab: H^*_c(\conf_n(\mc{X}) \times \mathbb{R}^d; \mc{O}') \to H^*_c(\conf_{n+1}(\mc{X}) ; \mc{O}). \]

Similarly, on the filtration differences of the filtration, we have a stabilisation map
	\[ \stab: H_*( \fonf_{n,m}(\mc{X}) \times \mathbb{R}^d ; (i \times id)^*\mc{O}' \otimes \mc{O}_n' ) \to
			H_*( \fonf_{n+1,m}(\mc{X}); i^*\mc{O} \otimes \mc{O}_{n+1} ). \]
			
\begin{proposition} \label{prop: filtration stable} The map
\[	\stab_* : H_*( \fonf_{n,m}(\mc{X}) \times \mathbb{R}^d ; (i \times id)^*\mc{O}' \otimes \mc{O}_n' ) \to
			H_*( \fonf_{n+1,m}(\mc{X}); i^*\mc{O} \otimes \mc{O}_{n+1} )   \]
	is an isomorphism for $* \leq (n-m)/2$
\end{proposition}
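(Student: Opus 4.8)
The plan is to reduce the statement to homological stability for configuration spaces of honest manifolds, using the decomposition supplied by \cref{prop: fonf is donf}. By that proposition, it suffices to prove the analogous statement for $\donf_{n,m}(\mc{X}) \times \mathbb{R}^d$, where the non-orbifold points are constrained to lie in a single component $\mc{X}'^b$ of the orbifold-point-free locus $\mc{X}'$. First I would observe that, after fixing the (unordered) set of $m$ orbifold points, $\donf_{n,m}(\mc{X})$ fibres — or at least admits a stratification whose pieces are — over the finite set (or more precisely the space) of configurations of $m$ orbifold points, with fibre a configuration space of $n-m$ points in the open manifold $\mc{X}'^b$ with the finitely many chosen orbifold points deleted. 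Crucially, deleting finitely many points from an open connected manifold of dimension $\geq 2$ yields another open connected manifold of dimension $\geq 2$, to which McDuff--Segal stability applies \cite{mcduff75, segal79}: the stabilisation map near $\varepsilon$ only moves the non-orbifold points and acts fibrewise, so it is exactly the manifold stabilisation map on each fibre.

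The key steps, in order, are: (1) invoke \cref{prop: fonf is donf} to replace $\fonf_{n,m}$ by $\donf_{n,m}$ and note that $\stab$ is compatible with the inclusion $i$ since $\varepsilon$ lies on the boundary and adding/pushing in a point near $\varepsilon$ keeps that new point in $\mc{X}'^b$; (2) identify $\donf_{n,m}(\mc{X})$ up to (rational) homotopy with a bundle over the configuration space $\conf_m(\{\text{orbifold points}\})$ — which is a fixed space independent of $n$ — whose fibre over a chosen tuple of orbifold locations is $\conf_{n-m}$ of the open manifold obtained by deleting those points; (3) run the Leray--Serre spectral sequence of this bundle, with the twisted coefficients $i^*\mc{O}\otimes\mc{O}_n$ (respectively their $\mathbb{R}^d$-stabilised versions); by \cref{prop: trivial coefficients} these restrict to the \emph{trivial} local system on each stratum/fibre, so on the fibre the spectral sequence has untwisted coefficients and $\stab$ induces McDuff--Segal stability there, an isomorphism in the fibrewise range $* \leq (n-m)/2$; (4) since the base is independent of $n$ and the stabilisation map is a map of bundles covering the identity on the base, the map of $E_2$-pages $H_p(\text{base}) \otimes H_q(\text{fibre})$ is an isomorphism for $q \leq (n-m)/2$, hence for total degree $\leq (n-m)/2$, and this persists to $E_\infty$ and to the abutment. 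For the $\mathbb{R}^d$-crossed and compactly-supported versions one uses that $\mathbb{R}^d$ is contractible so crossing with it changes nothing on homology, together with \cref{prop: trivial coefficients R}.

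The main obstacle I anticipate is making step (2) precise as a genuine fibration (or at least a homology fibration) of \emph{orbispaces}: the ``base'' of $m$ orbifold points is itself a configuration-type space that can have several strata according to which isotropy groups occur and which orbifold points are ghost points of one another, and one must check that the fibrewise stabilisation map is well-defined and continuous across these strata and that the local system $i^*\mc{O}\otimes\mc{O}_n$ really is trivial on each fibre (which is exactly the content of \cref{prop: trivial coefficients}, so this is handled, but the book-keeping of ordered versus unordered orbifold points in the monodromy needs care). A cleaner route that avoids fibration subtleties is to argue directly on compactly supported cohomology: stratify $\donf_{n,m}$ by the combinatorial type of the orbifold-point configuration, get a spectral sequence in $H^*_c$ whose $E_1$-terms are (shifted) $H^*_c$ of products of a fixed stratum-type space with a configuration space of an open manifold, apply McDuff--Segal stability (dualised via Poincaré duality, using that these strata are manifolds) on each such term, and observe that $\stab$ induces a map of these spectral sequences which is an isomorphism on $E_1$ in the claimed range by the manifold case; the range $*\leq (n-m)/2$ is exactly the manifold stability range for $\conf_{n-m}$ of the deleted manifold, so it propagates to the abutment.
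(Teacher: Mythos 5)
Your proposal is correct and follows the same overall strategy as the paper: reduce to $\donf_{n,m}(\mc{X})$ via \cref{prop: fonf is donf}, use \cref{prop: trivial coefficients} and \cref{prop: trivial coefficients R} to pass to constant rational coefficients, and then quote McDuff--Segal stability for the open manifold $\mc{X}'^b$. The one place you diverge is step (2), where you set up a fibration over the space of $m$ orbifold points and run a Leray--Serre spectral sequence; the paper avoids this entirely by observing that $\donf_{n,m}(\mc{X})$ is an honest \emph{product} $\conf_{n-m}(\mc{X}'^b) \times \conf_m(\mc{X}\setminus\mc{X}')$, with the stabilisation map acting as $\stab\times\mathrm{id}$. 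The point you flag as your "main obstacle" in fact dissolves: the fibre of your proposed projection does not depend on which $m$ orbifold points are occupied, because the $n-m$ non-orbifold points are constrained to lie in $\mc{X}'^b$, which already has \emph{all} orbifold points deleted (not just the $m$ chosen ones --- your description of the fibre as "the open manifold obtained by deleting those points" is slightly off on this score). So the "bundle" is trivial, the K\"unneth theorem replaces the Serre spectral sequence, and the second factor is a fixed space independent of $n$, giving the range $* \leq (n-m)/2$ directly from the manifold case. Your fallback argument via compactly supported cohomology would also work but is likewise more machinery than is needed once the product structure is recognised.
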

\begin{proof} By \cref{prop: trivial coefficients} and \cref{prop: trivial coefficients R} all homology groups have trivial rational coefficients.

Let $X'$ be the orbifold $\mc{X}$ with all orbifold points removed. Let $X^b$ denote the component of $\mc{X'}$ containing the point on the boundary of $\mc{X}$ which we add in by the stabilisation map.
By \cref{prop: fonf is donf} it is enough to prove stability for $\donf_{n,m}(\mc{X})$. Recall that $\donf_{n,m}(\mc{X})$ is the suborbifold of $\fonf_{n,m}(\mc{X})$ such that all non-orbiolfd points are in $X^b$.
There is an obvious diffeomorphism
	\[ \donf_{n,m}(\mc{X}) \cong \conf_{n-m}(X^b) \times \conf_m(\mc{X} - X) \]
	which sends a configuration to its $n-m$ configuration points which are in $X^b$ and its $m$ orbifold points which are in $\mc{X} - X$. The stabilisation map is given by
		\[ \stab \times id : \conf_{n-m}(X^b) \times \conf_m(\mc{X}-X) \to \conf_{n-m+1}(X^b) \times \conf_m(\mc{X}-X). \]
	Now, $X^b$ is a connected open manifold of dimension greater than one, and so by stability for configuration spaces of manifolds (with trivial coefficients) \cite{orw13, segal79}, $\stab_*$ is an isomorphism in the desired range.
\end{proof}

\begin{theorem} \label{thm: stability open} Let $\mc{X}$ be a connected open orbifold that is the interior of an orbifold of dimension $\geq 2$ with boundary such that a boundary component admits a collar.
Then
	\[ \stab : \conf_n(\mc{X}) \to \conf_{n+1}(\mc{X})\] 
induces isomorphisms in rational homology for $* \leq n/2$.
\end{theorem}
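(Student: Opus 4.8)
The plan is to run the standard spectral-sequence argument over the filtration $F_p$ of $\conf_n(\mc{X})$ by number of orbifold points, working in compactly supported cohomology with orientation-twisted coefficients, and to compare with the corresponding filtration of $\conf_{n+1}(\mc{X})$ via the (open-embedding) stabilisation map. By the twisted Poincar\'e duality proposition, the target isomorphism $H_*(\conf_n(\mc{X}) \times \mathbb{R}^d; \mc{O}') \cong H_*(\conf_{n+1}(\mc{X}); \mc{O})$ in degrees $* \le n/2$ is equivalent to an isomorphism $H^*_c(\conf_n(\mc{X})\times\mathbb{R}^d; \mc{O}') \to H^{*+d}_c(\conf_{n+1}(\mc{X}); \mc{O})$ in degrees $* \ge nd - n/2$ (and when $\conf_n(\mc{X})$ happens to be orientable one can use untwisted duality, but the twisted version handles all cases uniformly). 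So I would reduce the theorem to that compactly supported cohomological statement.

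The key steps, in order: (1) The filtration $\emptyset = F_{-1} \subset F_0 \subset \cdots \subset F_n = \conf_n(\mc{X})$ is by open suborbispaces with filtration differences $\fonf_{n,p}(\mc{X}) = F_p - F_{p-1}$; the stabilisation map $\stab$, being an open embedding, respects this filtration and sends $\fonf_{n,p}(\mc{X})\times\mathbb{R}^d$ into $\fonf_{n+1,p}(\mc{X})$, since adding a point near a boundary point does not change the number of orbifold points. (2) An open filtration of a space gives a spectral sequence in compactly supported cohomology whose $E_1$-page is $E_1^{p,q} = H_c^{p+q}(\fonf_{n,p}(\mc{X}); \mc{O})$ (with the appropriate twist), converging to $H_c^{p+q}(\conf_n(\mc{X}); \mc{O})$; $\stab$ induces a map of spectral sequences. (3) On the $E_1$-page, the map induced by $\stab$ is, up to the degree shift by $d$ and the identification of twisted coefficients as trivial (\cref{prop: trivial coefficients}, \cref{prop: trivial coefficients R}), exactly the map $\stab_*: H_*(\fonf_{n,m}(\mc{X})\times\mathbb{R}^d) \to H_*(\fonf_{n+1,m}(\mc{X}))$ dualised; by \cref{prop: filtration stable} this is an isomorphism for homological degree $* \le (n-m)/2$, i.e. for compactly supported cohomological degree in the corresponding co-range. (4) Track the ranges: the $p$-th filtration stratum ($p = m$ orbifold points) contributes in a range that, after the duality dimension shift, gives an isomorphism on $E_1$ through the range needed for $H_k$, $k \le n/2$; here one uses that strata with more orbifold points only affect higher (co)homological degrees, so the worst stratum is $m = 0$ and the bound $(n-m)/2 \ge \lfloor$ something $\rfloor$ is uniform enough. (5) Conclude by the comparison theorem for spectral sequences: an isomorphism on $E_1$ in a range of total degrees yields an isomorphism on the abutment in (a possibly slightly smaller, but still $\le n/2$) range, hence $\stab_*$ is an isomorphism on $H_*(\conf_n(\mc{X}))$ for $* \le n/2$.

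The main obstacle I expect is bookkeeping the ranges precisely: one must check that the map of $E_1$-pages is an isomorphism in a band of total degrees wide enough that the convergence argument still delivers the clean bound $* \le n/2$ for the abutment, rather than losing a constant. Concretely, \cref{prop: filtration stable} gives an iso on the stratum with $m$ orbifold points only for $* \le (n-m)/2$, and one has to argue that the strata with small $m$ (which have the worst range) nonetheless contribute to $H_c$ only in codegrees that, after the Poincar\'e duality shift $* \mapsto nd - *$ and the stabilisation shift by $d$, line up so that the comparison in degrees $\le n/2$ goes through — and also that the filtration has finite length so there are no convergence issues. A secondary technical point is justifying that the compactly supported cohomology spectral sequence of the open filtration behaves functorially for the open embedding $\stab$ and that the twisted coefficient systems pull back compatibly, which is where \cref{prop: trivial coefficients R} is used to replace everything by constant rational coefficients on each stratum.
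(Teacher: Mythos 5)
Your proposal is correct and follows essentially the same route as the paper: the paper likewise reduces via twisted Poincar\'e duality to a statement in compactly supported cohomology, runs the spectral sequence of the open filtration $F_p$ (comparing it with the filtration of $\conf_{n+1}(\mc{X})$ through the open-embedding stabilisation map), and applies \cref{prop: filtration stable} together with \cref{prop: trivial coefficients} and \cref{prop: trivial coefficients R} on the $E^1$-page before concluding by comparison of spectral sequences. The range bookkeeping you flag as the main obstacle is indeed the only delicate point, and the paper handles it exactly as you anticipate.
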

\begin{proof}
We will prove that the map is an isomorphism in compactly supported cohomology with twisted coefficients in the range $* \geq nd - n/2$. In particular, we show that

\[ \stab: H^*_c (\conf_n(\mc{X}) \times \mathbb{R}^d ; \mc{O}') \to H^*_c( \conf_{n+1}(\mc{X}) ; \mc{O}) \]
is an isomorphism for $* \geq nd - n/2$.

We use the filtration of $\conf_n(\mc{X})$ in \cref{sec: filtration} with $p$th level of the filtration given by

 \[ F_p = \sqcup_{i=0}^p \fonf_{n,i}(\mc{X}).\] 
 
There is similarly a filtration on $\conf_n(\mc{X}) \times \mathbb{R}^d$ given by $F_p \times \mathbb{R}^d$. Note that the filtration difference $F_{p} - F_{p-1} = C_{n,p}(\mc{X})$ and $(F_p \times \mathbb{R}^d) - (F_{p-1} \times \mathbb{R}^d) = C_{n,p}(\mc{X}) \times \mathbb{R}^d$.

Associated to these filtrations is a spectral sequence in compactly supported cohomology. For details of the construction of this spectral sequence see Section 2 of \cite{kmt15}. In our case, the spectral sequences are of the form
	\[ E^1_{pq} = H^{p+q}_c(\fonf_{n,p}(\mc{X}) \times \mathbb{R}^d ; (i \times id)^*\mc{O}') \]
converging to
	\[ E^{\infty}_{pq} = H^{p+q}_c(\conf_n(\mc{X}) \times \mathbb{R}^d; \mc{O}') \]
and
	\[ {'E}^1_{pq} =  H^{p+q}_c(\fonf_{n,p}(\mc{X}); (i \times id)^*\mc{O} ) \]
converging to
	\[ {'E}^\infty_{pq} = H^{p+q}_c(\conf_n(\mc{X}); \mc{O}). \]

The stabilisation map respects the filtrations so we get a map of spectral sequences 
	\[ \stab: E_{**}^\bullet \to {'E}_{**}^\bullet. \]
By \cref{prop: filtration stable} and twisted Poincar\'{e} duality, the stabilisation map on $E^1_{pq} \to {'E}^1_{pq}$ induces an isomorphism whenever $p+q \geq nd - n/2$. Therefore the map on the $E^\infty$ page of the spectral sequence is an isomorphism for $* \geq nd-n/2$. Poincar\'{e} duality with coefficients in the rational orientation local system then gives the desired result.

\end{proof}

%

\section{Closed orbifolds} \label{sec: closed}
In this section, we prove that homological stability for configuration spaces of orbifolds also holds for closed orbifolds. We follow an argument based on that of \cite{orw13}.

We first define a semisimplicial orbispace associated to the configuration space of an orbifold. Let
	\[ \conf_n(\mc{X})_i := \{ (\underline{x}, p_0, \ldots, p_i) \} \]
where $\underline{x} \in \conf_n(\mc{X})$ and the $\underline{x} \cup (\cup_i \{p_i \}) \in \conf_{n+p+1}(\mc{X})$. Moreover, the $p_i$ should not be orbifold points. The boundary maps $\del_j : \conf_n(\mc{X})_p \to \conf_n(\mc{X})_{p-1}$ are given by forgetting the $j$th $p_i$.

By taking classifying spaces, and the induced boundary maps, we obtain a semisimplicial space whose $i$-simplices are given by $B\conf_n(\mc{X})_i$.

\begin{proposition} \label{prop: fibre} The map
	\[ f: \conf_n(\mc{X})_i \to \mathrm{P}\fonf_{i+1,0}(\mc{X})\]
given by forgetting $\underline{x}$ is a fibre bundle with fibre over $\underline{p} = (p_0, \ldots , p_i)$ given by $\conf_n(\mc{X}_{\underline{p}})$, where $\mc{X}_{\underline{p}}$ is the orbifold $\mc{X}$ with all points with arrows to and from $\underline{p}$ (i.e., $\underline{p}$ and all its ghost points) removed.
\end{proposition}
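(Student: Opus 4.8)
The plan is to construct the fibre bundle structure explicitly by exhibiting local trivialisations over $\mathrm{P}\fonf_{i+1,0}(\mc{X})$. First I would make precise the claim that $f$ forgets $\underline{x}$: an element of $\conf_n(\mc{X})_i$ is a pair $(\underline{x}, \underline{p})$ with $\underline{p} = (p_0, \ldots, p_i)$ an ordered tuple of non-orbifold points (so that it really does land in $\mathrm{P}\fonf_{i+1,0}(\mc{X})$) and $\underline{x} \cup (\cup_j\{p_j\})$ a valid configuration of $n+i+1$ points. The identification of the fibre is then immediate: the preimage $f^{-1}(\underline{p})$ consists of configurations $\underline{x}$ of $n$ points on $\mc{X}$ none of which is an orbifold point equal to, or a ghost point of, any $p_j$, and none equal to a $p_j$ itself; removing all points with arrows to or from $\underline{p}$ — that is, removing $\underline{p}$ together with all its ghost points — from $\mc{X}$ produces exactly the orbifold $\mc{X}_{\underline{p}}$ whose configuration space is this fibre.

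The substantive part is local triviality. Fix $\underline{p}^0 = (p^0_0, \ldots, p^0_i)$ in the base. Since the $p^0_j$ are non-orbifold points and the orbispace $\mathrm{P}\fonf_{i+1,0}(\mc{X})$ is a manifold near $\underline{p}^0$ (each $p^0_j$ has trivial isotropy, so an orbifold chart around it is an ordinary manifold chart), I can choose a small product neighbourhood $W \cong \prod_j W_j$ of $\underline{p}^0$ in which each $W_j$ is an honest coordinate ball in $ob(\mc{X})$ disjoint from the orbifold locus, with disjoint closures, and small enough that no $W_j$ meets the ghost-point set of another $\underline{p}$-coordinate for $\underline{p} \in W$. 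Over $W$ I would build an isotopy, depending continuously (indeed smoothly) on $\underline{p} \in W$, of the identity of $\mc{X}$ supported in $\cup_j W_j$ that carries $\underline{p}^0$ to $\underline{p}$; concretely, on each $W_j$ take a compactly supported diffeomorphism dragging $p^0_j$ to $p_j$ and fixing $ob(\mc{X}) \setminus W_j$, and assemble an orbifold diffeomorphism $\Phi_{\underline{p}}$ of $\mc{X}$ (away from the orbifold locus this is just the manifold construction; near orbifold points it is the identity, so it is automatically an orbifold map). The diffeomorphism $\Phi_{\underline{p}}$ restricts to a diffeomorphism $\mc{X}_{\underline{p}^0} \xrightarrow{\sim} \mc{X}_{\underline{p}}$ — this uses the choice of $W$ ensuring that the ghost-point sets of $\underline{p}^0$ and $\underline{p}$ correspond under $\Phi_{\underline{p}}$, which follows because $s \circ t^{-1}$ of each $W_j$ is carried along by the equivariant drag. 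Then
\[
	W \times \conf_n(\mc{X}_{\underline{p}^0}) \to f^{-1}(W), \qquad (\underline{p}, \underline{x}) \mapsto (\underline{p}, (\conf_n \Phi_{\underline{p}})(\underline{x}))
\]
is the desired trivialisation; its inverse applies $\conf_n\Phi_{\underline{p}}^{-1}$ fibrewise.

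The main obstacle I anticipate is the careful bookkeeping with ghost points near the orbifold locus: one has to ensure that pushing $\underline{p}$ around inside $W$ does not inadvertently move its ghost points into, or out of, a controlled region, and that the removed sets $\underline{p} \cup (\text{ghosts})$ form a locally trivial sub-bundle so that the complements $\mc{X}_{\underline{p}}$ fit together into a trivial family over $W$. This is handled by shrinking $W$ so that each $W_j$ and each of its (finitely many, by properness) $s\circ t^{-1}$-translates are disjoint from one another and from the orbifold points, and by performing the drag $\Phi_{\underml{p}}$ equivariantly with respect to the groupoid structure; properness of $\mc{X}$ guarantees the ghost-point set of a non-orbifold point is finite and varies continuously, so the construction is uniform on a small enough $W$. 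Everything else — continuity of the trivialisations, compatibility of overlaps, and passage to classifying spaces, where $Bf$ becomes a fibration by the corresponding statement for topological groupoids — is routine.
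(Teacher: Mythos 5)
Your proposal is correct and follows essentially the same route as the paper: the paper's own proof just asserts that local triviality ``follows for the same reason that $f$ is a fibre bundle for $\mc{X}$ a manifold'', i.e.\ the standard Fadell--Neuwirth point-dragging trivialisation, which is exactly what you carry out, adding the ghost-point and equivariance bookkeeping needed in the orbifold setting. Your write-up supplies the details that the paper's two-sentence proof leaves to the reader.
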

\begin{remark} $\mathrm{P}\fonf_{i+1,0}(\mc{X})$ is the ordered configuration space of $\mc{X}$ where the configuration points must be non-orbifold points. Because the $p_i$ are non-orbifold points, we have that $(p_0, \ldots, p_i) \in \mathrm{P}\fonf_{i+1,0}(\mc{X})$.
\end{remark}
\begin{proof} If the $p_i$ are not orbifold points, then connectivity of $\mc{X}$ implies that the diffeomorphism type of the orbifold $\mc{X}_{\underline{p}}$ does not depend on $\underline{p}$. That $f$ is a fibre bundle follows for the same reason that $f$ is a fibre bundle for $\mc{X}$ a manifold. 
\end{proof}

A map $\pi:E \to B$ is a \emph{microfibration} if it partially satisfies the homotopy lifting property. That is, if $m\geq 0$ and the following diagram commutes,
\[	\xymatrix{ D^m \times \{0\}	\ar[r] \ar[d]	 &		E \ar[d]^{\pi} \\
			D^m \times [0,1] \ar[r]		&		B
			}
\]
then there exists $\epsilon >0$ such that there is a partial lift $D^m \times [0, \epsilon] \to E$ making the diagram commute. Lemma 2.2 of \cite{weiss05} states that a microfibration with weakly contractible fibres is a weak equivalence.

\begin{lemma} \label{lem: microfibration} The map $\varphi: \norm{ B\conf_n(\mc{X})_\bullet} \to B\conf_n(\mc{X})$ is a weak equivalence.
\end{lemma}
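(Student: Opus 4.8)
The plan is to apply Weiss's criterion (Lemma 2.2 of \cite{weiss05}) directly: show that $\varphi : \norm{B\conf_n(\mc{X})_\bullet} \to B\conf_n(\mc{X})$ is a microfibration with weakly contractible fibres, hence a weak equivalence. The map $\varphi$ is the one induced by the augmentation of the semisimplicial space $B\conf_n(\mc{X})_\bullet \to B\conf_n(\mc{X})$ (the $(-1)$-simplices being $B\conf_n(\mc{X})$ itself, with $\conf_n(\mc{X})_i \to \conf_n(\mc{X})$ given by forgetting all the $p_j$).

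\textbf{Identifying the fibres.} First I would compute the fibre of $\varphi$ over a point of $B\conf_n(\mc{X})$ corresponding to a configuration $\underline{x}$. A point in the geometric realisation lying over $\underline{x}$ is represented by a tuple $(p_0, \ldots, p_i)$ of distinct non-orbifold points, none in $\underline{x}$ or among its ghost points, together with barycentric coordinates; the face maps forget points. So the fibre is the geometric realisation of the semisimplicial space whose $i$-simplices are ordered configurations of $i+1$ non-orbifold points in the open orbifold $\mc{X}$ with $\underline{x}$ and its ghost points removed — this is exactly the (semisimplicial model of the) space of finite nonempty ordered subsets of non-orbifold points of $\mc{X}_{\underline{x}}$, where $\mc{X}_{\underline{x}}$ denotes $\mc{X}$ with $\underline{x}$ and its ghosts deleted. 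This is a standard construction: the realisation of the semisimplicial space of ordered configurations in a (nonempty, in fact here positive-dimensional and nonempty) space is weakly contractible. Concretely one can see contractibility by the usual ``add a fixed point first'' argument — choosing a non-orbifold point $q \in \mc{X}_{\underline{x}}$ not among the $p_j$, the operation $(p_0,\dots,p_i) \mapsto (q, p_0, \dots, p_i)$ gives an extra degeneracy / a simplicial contraction onto the point. Since $\mc{X}_{\underline{x}}$ is a nonempty manifold of dimension $\geq 2$ after removing finitely many points, such a $q$ always exists near any prescribed point, so this works fibrewise in a way compatible with the topology.

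\textbf{The microfibration property.} Given a commuting square with $D^m \times \{0\} \to \norm{B\conf_n(\mc{X})_\bullet}$ and $D^m \times [0,1] \to B\conf_n(\mc{X})$, I would lift as follows. The image of $D^m \times \{0\}$ lands in a finite union of simplices, so is covered by the images of finitely many $B\conf_n(\mc{X})_i \times \Delta^i$; over a compact set the data is a continuously varying finite set of auxiliary non-orbifold points $\underline{p}(u)$ away from the configuration $\underline{x}(u)$, with barycentric weights. As $t$ increases, $\underline{x}$ varies; by \cref{prop: fibre} (the fibre-bundle statement) and compactness, the auxiliary points $\underline{p}$ can be dragged along, remaining distinct from $\underline{x}$ and from its ghost points and each other, for a short time $\epsilon > 0$ depending only on $m$ and the square — this is exactly the standard ``push-off'' argument that makes $f$ a fibre bundle, applied with parameters. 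Keeping the barycentric coordinates fixed gives the required partial lift $D^m \times [0,\epsilon] \to \norm{B\conf_n(\mc{X})_\bullet}$ making both triangles commute.

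\textbf{Main obstacle.} The routine bookkeeping (simplices, barycentric coordinates, semisimplicial realisations of orbispaces and their classifying spaces) is the bulk of the work but is standard. The genuine point requiring care is the fibre computation: one must be sure that the fibre of $\varphi$ over $\underline{x}$ really is the realisation of the semisimplicial \emph{space} $i \mapsto B\mathrm{P}\fonf_{i+1,0}(\mc{X}_{\underline{x}})$ (not just a discrete semisimplicial set), and that this is weakly contractible. The latter is where connectivity of $\mc{X}$ and $\dim \mc{X} \geq 2$ enter — they guarantee $\mc{X}_{\underline{x}}$ is a nonempty positive-dimensional manifold so that the ``prepend a nearby point'' contraction is available continuously. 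I expect the write-up to invoke this as the known contractibility of the space of nonempty finite subsets (equivalently, the realisation of the Čech-type / configuration semisimplicial space) of a nonempty space, combined with Weiss's lemma, and to defer the microfibration verification to the parametrised version of the isotopy-extension argument already used for \cref{prop: fibre}.
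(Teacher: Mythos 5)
Your proposal is correct and follows essentially the same route as the paper: exhibit $\varphi$ as a microfibration with weakly contractible fibres and invoke Lemma 2.2 of \cite{weiss05}, with the microfibration step coming from the observation that the lifting data over $\underline{x}$ remains valid for all nearby configurations plus compactness, and the fibre identified as the realisation of the semisimplicial space of ordered tuples of non-orbifold points in $\mc{X}_{\underline{x}}$. The only point where the paper is more careful than your sketch is the contraction of the fibre: the ``prepend a fixed point $q$'' map is not literally continuous on the whole realisation (since $q$ must avoid the varying $p_j$), so the paper instead takes a map from a compact $S^k$, homotopes it off a chosen non-orbifold point $y$, and then cones to $y$ using barycentric coordinates --- which is the precise form of the fix you gesture at with ``such a $q$ exists near any prescribed point.''
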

\begin{proof} We will show that $\varphi$ is a microfibration with weakly contractible fibres.

Given the following commutative square
	\[ \xymatrix{ 
				D^n \times \{0\} \ar[r] \ar[d] &	\norm{B\conf_n(\mc{X})}_\bullet \ar[d]^f \\
				D^n \times [0,1] \ar[r]^h & 	B\conf_n(\mc{X})
		} \]
we need to show that there exist $\epsilon > 0$ such that we have a partial lift $\tilde{h} :D^n \times [ 0, \epsilon] \to \norm{B\conf_n(\mc{X})}_\bullet$ that extends the top horizontal map.

We first describe the lift at the level of $\conf_n(\mc{X})$. Now suppose that we have a point $\underline{x} \in \conf_{n}(\mc{X})$. The data of a lift of $\underline{x}$ is a configuration in $\pconf_{i+1}(\mc{X}_{\underline{x}})$ and a simplicial coordinate $t \in \Delta^i$. Note that if $\underline{x}'$ is sufficiently close to $\underline{x}$, then $\underline{x}'$ will be disjoint from the configuration lifting $\underline{x}$, so that this data is also a lift for $\underline{x}'$.

Now, given $(b,s) \in D^m \times [0,1]$ with $s > 0$, define the lift $\hat{\varphi}(b,s)$ to be given by $\varphi(b,s) \in \conf_n(\mc{X})_i$, together with the data of the simplicial coordinate and configurations of $\varphi_0(b,0) \in \norm{\conf_n(\mc{X})_\bullet}$. By the preceding paragraph, for $b \in D^m$,  this is well defined for all $s \leq \epsilon_b$, for some $\epsilon_b > 0$. By compactness, we can find a single $\epsilon >0$ that makes this work, which gives the desired partial lift. By constructing this lift at each level of the nerve and taking classifying spaces, we see that $B\hat{\varphi}$ defines a partial lift of $h$.

We have shown that $\varphi$ is a microfibration.

Let $F(\mc{X}_{\underline{x}})_i$ be the space of $(i+1)$-tuples of distinct non-orbifold points of $\mc{X}_{\underline{x}}$ (distinct in the sense of no arrows). These spaces form a semisimplicial orbispace $F(\mc{X}_{\underline{x}})_\bullet$.
The fibre over $\underline{x}$ is given by the geometric realisation of this semisimplicial orbispace. We show that this semisimplicial orbispace is weakly contractible.

By taking small neighbourhoods of the points in $\underline{x}$, we can find a closed orbifold $\mc{X}' \subset \mc{X}_{\underline{x}}$ which is homotopy equivalent to $\mc{X}_{\underline{x}}$ with some non-oribifold point $y \in (\mc{X}_{\underline{x}}) - \mc{X}'$. 

Now suppose we have map $f : S^k \to \norm{F(\mc{X}_{\underline{x}})_\bullet}$. By the previous homotopy equivalence, we can deform $f$ so that $y$ does not lie in its image. Now, we can fill in $f$ by defining a map $\hat{f} : \mathrm{cone}(S^k) \to \norm{F(\mc{X}_{\underline{x}})_\bullet}$ that sends the cone point to $y$. We can describe $\hat{f}$ using barycentric coordinates. A point in $\mathrm{cone}(S^k)$ is determined by $(s,t)$ with $s \in S^k$ and $t \in [0,1]$ with $(s,1) \sim (s',1)$. A point in $\norm{F(\mc{X}_{\underline{x}})_\bullet)}$ is given by a finite ordered configuration and some barycentric coordinates. We can then define $\hat{f}$ by: if $f(s) = ((p_0, \ldots, p_i), (u_0, \ldots, u_i))$, then $\hat{f}$ sends $(s,t)$ to
	\[ ((p_0, \ldots, p_i, x),((1-t)u_0, \ldots, (1-t)u_i, t)). \]
Therefore $\norm{F(\mc{X}_{\underline{x}})_\bullet}$ is contractible.

By Lemma 2.2 of \cite{weiss05}, a microfibration with contractible fibres is a weak equivalence.
\end{proof}

\begin{proposition} \label{prop: rational inverse} Let $\mc{X}$ be a connected open orbifold of dimension $\geq 2$ admitting a collar. The transfer map $t : H_*(\conf_n(\mc{X}); \mathbb{Q}) \to H_*(\conf_{n-1}(\mc{X}); \mathbb{Q})$ is an isomorphism for $* \leq n/2$.
\end{proposition}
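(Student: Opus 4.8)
The plan is to deduce the proposition from homological stability for the stabilisation map (\cref{thm: stability open}) by establishing an algebraic identity between the transfer $t=t_{n,n-1}$ and the stabilisation map $\stab_*$. All homology below is with $\mathbb Q$ coefficients, and $\stab\colon\conf_{n-1}(\mc X)\to\conf_n(\mc X)$ denotes the stabilisation map, which is available since $\mc X$ admits a collar.

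First I would make a pullback construction explicit. Recall that $t_{n,n-1}=\mathrm{forget}_*\circ p^!$, where $p\colon\conf_{n,n-1}(\mc X)\to\conf_n(\mc X)$ is the $n$-fold covering of \cref{sec: cover} and $\mathrm{forget}$ remembers the subcollection of $n-1$ points. Pulling $p$ back along $\stab$ gives an $n$-fold covering $\bar p\colon Q\to\conf_{n-1}(\mc X)$: a point of $Q$ is a configuration $\underline x\in\conf_{n-1}(\mc X)$ together with a choice of one of the $n$ points of $\stab(\underline x)$. The orbispace $Q$ is a disjoint union $Q_{\mathrm{new}}\sqcup Q_{\mathrm{old}}$ according to whether the chosen point is the newly added point or one of the $n-1$ old ones; one identifies $Q_{\mathrm{new}}\cong\conf_{n-1}(\mc X)$ with $\bar p|_{Q_{\mathrm{new}}}$ the identity, and $Q_{\mathrm{old}}\cong\conf_{n-1,n-2}(\mc X)$ with $\bar p|_{Q_{\mathrm{old}}}$ the standard $(n-1)$-fold covering. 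Feeding the base-change identity $p^!\circ\stab_*=\bar{\stab}_*\circ\bar p^{\,!}$ through $\mathrm{forget}_*$, and using that $\mathrm{forget}\circ\bar{\stab}$ is homotopic to the identity on $Q_{\mathrm{new}}$ and equals $\stab$ followed by forgetting the marked point on $Q_{\mathrm{old}}$, I would obtain the identity
\[ t_{n,n-1}\circ\stab_* \;=\; \mathrm{id}\;+\;\stab_*\circ t_{n-1,n-2} \]
of endomorphisms of $H_*(\conf_{n-1}(\mc X);\mathbb Q)$.

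Next, fixing a degree $k$, set $L_n:=t_{n,n-1}\circ\stab_*\in\mathrm{End}\,H_k(\conf_{n-1}(\mc X);\mathbb Q)$ and prove by induction on $n$ that every eigenvalue of $L_n$ is a positive integer; in particular $L_n$ is invertible. The base case $n=1$ is trivial, since $\conf_0(\mc X)$ is a point and $L_1=\mathrm{id}$. For the inductive step, put $R_{n-1}:=\stab_*\circ t_{n-1,n-2}$, so the identity reads $L_n=\mathrm{id}+R_{n-1}$. The endomorphisms $R_{n-1}$ and $L_{n-1}=t_{n-1,n-2}\circ\stab_*$ are $BA$ and $AB$ for the single pair $A=t_{n-1,n-2}$, $B=\stab_*$, so they have the same nonzero spectrum; by the inductive hypothesis that spectrum consists of positive integers, hence every eigenvalue of $R_{n-1}$ is a nonnegative integer and every eigenvalue of $L_n=\mathrm{id}+R_{n-1}$ is a positive integer. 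Finally, in the range where $\stab_*\colon H_k(\conf_{n-1}(\mc X);\mathbb Q)\to H_k(\conf_n(\mc X);\mathbb Q)$ is an isomorphism (which is the range of \cref{thm: stability open}), the identity $L_n=t_{n,n-1}\circ\stab_*$ presents $t_{n,n-1}=L_n\circ\stab_*^{-1}$ as a composite of isomorphisms, which is the assertion.

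I expect the main obstacle to be the construction of the base-change identity itself: because these are orbispaces and $p$ is only homotopic, after passing to classifying spaces, to a genuine finite covering, one has to work on classifying spaces and use \cref{sec: cover} both to see that $p$ and its pullback $\bar p$ are homotopic to finite coverings and to justify that the naturality square computes the transfer; the identification of $Q_{\mathrm{new}}$, $Q_{\mathrm{old}}$ and of the maps out of them is then just unwinding definitions, and the rest is elementary linear algebra whose only real ingredient is that $AB$ and $BA$ have the same nonzero eigenvalues.
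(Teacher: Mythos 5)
Your proof is correct and shares its skeleton with the paper's: both rest on the commutation relation $t_{n,n-1}\circ\stab_* = \mathrm{id} + \stab_*\circ t_{n-1,n-2}$ and then conclude, in the stable range supplied by \cref{thm: stability open}, that $t_{n,n-1} = (t_{n,n-1}\circ\stab_*)\circ\stab_*^{-1}$ is an isomorphism once $t_{n,n-1}\circ\stab_*$ is known to be invertible. The differences lie in how the two inputs are obtained. For the relation, the paper simply cites Chapter 3 of Bailes' thesis (together with the further relations $t_{n,m}\circ s_{n-1}=s_{m-1}\circ t_{n-1,m-1}+t_{n-1,m}$ and $t_{m+1}\circ\cdots\circ t_n=(n-m)!\,t_{m,n}$), whereas you rederive it by pulling the $n$-fold cover $\conf_{n,n-1}(\mc{X})\to\conf_n(\mc{X})$ back along $\stab$ and splitting the pullback into the ``new point'' and ``old point'' pieces; that is exactly the geometric content of the cited relation, though, as you acknowledge, making base change for transfers rigorous requires passing to classifying spaces and using \cref{sec: cover} to see that both $p$ and its pullback are homotopic to genuine finite covers. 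For the invertibility of $t_{n,n-1}\circ\stab_*$, the paper invokes Lemma 2 of \cite{dold62}, which yields a splitting of $H_*(\conf_n(\mc{X}))$ as $\oplus_{m\le n}\mathrm{coker}(s_{m-1})$ on which $t_{n}\circ s_{n-1}$ acts by nonzero integers; your inductive argument via the fact that $AB$ and $BA$ share their nonzero spectrum is more elementary and avoids the extra relations, at the cost of the integral information Dold's splitting carries. One point to tighten: since $\mc{X}$ is only assumed to be a connected open orbifold, $H_k(\conf_{n-1}(\mc{X});\mathbb{Q})$ need not be finite dimensional, and ``all eigenvalues are positive integers'' does not by itself give invertibility there. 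The induction should instead be run on the statement that $L_n-\lambda$ is invertible for every $\lambda$ that is not a positive integer, using the resolvent identity $(BA-\lambda)^{-1}=\lambda^{-1}\bigl(B(AB-\lambda)^{-1}A-\mathrm{id}\bigr)$ in place of the eigenvalue comparison; with that adjustment the argument goes through verbatim.
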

\begin{proof} 
The result follows from Chapter 3 of Bailes' thesis \cite{bailes15} which relies on a result of Dold \cite[Lemma 2]{dold62}.

For brevity let $s_n$ denote the stabilisation map $(\stab)_* : H_*(\conf_n(\mc{X});\mathbb{Q}) \to H_*(\conf_{n+1}(\mc{X});\mathbb{Q})$. Let $t_n$ denote the transfer map $t_n:H_*(\conf_n(\mc{X});\mathbb{Q}) \to H_*(\conf_{n-1}(\mc{X});\mathbb{Q})$.

One can check that the maps $s$ and $t$ satisfy the relations
	\[ t_n \circ s_{n-1} = s_{n-2} \circ t_{n-1} + id. \]
More generally, they satisfy
	\[ t_{n,m}\circ s_{n-1} = s_{m-1} \circ t_{n-1, m-1} + t_{n-1, m}. \]
Furthermore,
	\[ t_{m+1} \circ \cdots \circ t_n = (n-m)!t_{m,n}. \]
This is the contents of Chapter 3 of Bailes' thesis \cite{bailes15}.

Letting $B_n = H_*(\conf_n(\mc{X}); \mathbb{Z})$, $A_n := \mathrm{coker}(s_{n-1}),$ and $\pi_q$ be the projection $B_q \to A_q$ we are now in the situation of Lemma 2 of \cite{dold62}. Dold gives a decomposition of the $B_n$ as $\oplus_{m \leq n} A_n$ and in particular shows the maps $s_n$ are split injective and
	\[ t_{n+1} \circ s_n \]
is multiplication by a nonzero integer constant on each summand. On rational homology, this is an isomorphism so $t_{n+1}$ is an isomorphism whenever $s_n$ is an isomorphism.
\end{proof}

\begin{theorem} \label{thm: stability closed} Let $\mc{X}$ be a connected orbifold of dimension $\geq 2$. The transfer map $t : H_*(\conf_n(\mc{X})) \to H_*(\conf_{n-1}(\mc{X}))$ is an isomorphism for $* \leq n/2$.
\end{theorem}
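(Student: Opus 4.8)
The plan is to bootstrap from the open case (\cref{thm: stability open}) and \cref{prop: rational inverse} using a transfer/fibration argument built on the semisimplicial resolution $\conf_n(\mc{X})_\bullet$ and \cref{lem: microfibration}. If $\mc{X}$ is already open, then it is the interior of an orbifold with boundary admitting a collar (one can always excise a small ball, or a small orbifold chart if $\del\mc{X}$ has no orbifold-free locus, shrinking to a situation covered by \cref{thm: stability open}); in that case \cref{thm: stability open} gives that $\stab_*$ is an isomorphism for $* \leq n/2$, and \cref{prop: rational inverse} upgrades this to the statement that the transfer $t$ is an isomorphism in the same range. So the real content is the closed case, which we reduce to the open case.

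First I would use \cref{lem: microfibration} to replace $B\conf_n(\mc{X})$ by $\norm{B\conf_n(\mc{X})_\bullet}$ up to weak equivalence, hence up to rational homology isomorphism. This gives a spectral sequence (the skeletal filtration of the semisimplicial space) with
\[ E^1_{i,q} = H_q(\conf_n(\mc{X})_i ; \mathbb{Q}) \Longrightarrow H_{i+q}(\conf_n(\mc{X}) ; \mathbb{Q}). \]
By \cref{prop: fibre}, the map $f:\conf_n(\mc{X})_i \to \mathrm{P}\fonf_{i+1,0}(\mc{X})$ is a fibre bundle with fibre $\conf_n(\mc{X}_{\underline p})$, where $\mc{X}_{\underline p}$ is $\mc{X}$ with the $i+1$ chosen non-orbifold points and their ghost points deleted; crucially $\mc{X}_{\underline p}$ is a \emph{connected open} orbifold of dimension $\geq 2$ (connectedness since $\dim \geq 2$, openness since we removed points), so the fibrewise transfer maps $t$ for $\conf_n(\mc{X}_{\underline p}) \to \conf_{n-1}(\mc{X}_{\underline p})$ are isomorphisms for $* \leq n/2$ by the already-established open case. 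One then assembles the fibrewise transfers over the base $\mathrm{P}\fonf_{i+1,0}(\mc{X})$ — the transfer construction is natural and the covering $\conf_{n,n-1}(\mc{X}_{\underline p}) \to \conf_n(\mc{X}_{\underline p})$ varies in a fibre bundle over the base, so we get a well-defined map of fibre bundles and hence a map on the Serre spectral sequences of $f$ — to conclude that
\[ t : H_*(\conf_n(\mc{X})_i ; \mathbb{Q}) \to H_*(\conf_{n-1}(\mc{X})_i ; \mathbb{Q}) \]
is an isomorphism for $* \leq n/2$, for each fixed $i$. (Here I am using that the base $\mathrm{P}\fonf_{i+1,0}(\mc{X})$ is the same for both and the transfer is compatible with the bundle projection.) This gives a map of the semisimplicial spectral sequences above which is an isomorphism on $E^1_{i,q}$ for $q \leq n/2$, hence an isomorphism on $E^\infty$ in total degree $\leq n/2$, and therefore
\[ t : H_*(\conf_n(\mc{X}) ; \mathbb{Q}) \to H_{*}(\conf_{n-1}(\mc{X}); \mathbb{Q}) \]
is an isomorphism for $* \leq n/2$. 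Finally, \cref{thm: main} follows since $t$ and $\stab$ (when defined) are mutually inverse isomorphisms in the stable range up to the scalars from \cref{prop: rational inverse}; iterating $t$ one step at a time and using that $\conf_{n-1}(\mc{X}) \hookleftarrow \conf_n(\mc{X})$ gives the claimed $H_k(\conf_n) \cong H_k(\conf_{n+1})$ for $k \leq n/2$.

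The main obstacle I expect is making the fibrewise transfer argument rigorous: one must check that the transfer map $t$ for $\conf_n(\mc{X}_{\underline p})$ can be defined \emph{uniformly} as $\underline p$ varies over $\mathrm{P}\fonf_{i+1,0}(\mc{X})$ — i.e. that $\conf_{n,n-1}(\mc{X})_i \to \conf_n(\mc{X})_i$ is again homotopic to a finite covering (this should follow from \cref{sec: cover} applied to the total space, or by noting the covering is pulled back fibrewise) and that the resulting transfer on total spaces restricts to the fibrewise transfer. Granting that, the spectral sequence comparison is formal, but one must be slightly careful that the isomorphism range $* \leq n/2$ is independent of $i$ (it is, since it depends only on $n$, not on $\mc{X}_{\underline p}$), so that the conclusion on $E^\infty$ holds in the stated range.
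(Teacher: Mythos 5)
Your proposal follows essentially the same route as the paper: resolve $\conf_n(\mc{X})$ by the semisimplicial space $\conf_n(\mc{X})_\bullet$ via \cref{lem: microfibration}, use the fibre bundle of \cref{prop: fibre} and the open case (\cref{prop: rational inverse}) to show the transfer is an isomorphism on the fibres $\conf_n(\mc{X}_{\underline{p}})$, and then compare the Serre and semisimplicial spectral sequences. The uniformity issue you flag for the fibrewise transfer is real but is also left implicit in the paper, which simply asserts that the transfer respects the simplicial structure.
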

\begin{proof}
Associated to a semisimplicial space is a spectral sequence that computes the homology of the geometric realisation in terms of the homology of its simplices. Applying the spectral sequence to $\conf_n(\mc{X})_\bullet$ we have a spectral sequence with
	\[ E^1_{pq} = H_q( \conf_n(\mc{X})_p ; \mathbb{Q}) \]
converging to
	\[ E^\infty_{pq} = H_{p+q} (\norm{\conf_n(\mc{X})_\bullet} ; \mathbb{Q}) \]
By \cref{lem: microfibration} the target of the spectral sequence can be identified with $H_* (\conf_n(\mc{X});\mathbb{Q})$.
For each $n$, we therefore get an associated Serre spectral sequence of the form
	\[ E^2_{st} = H_t( \mathrm{P}\fonf_p(\mc{X}) ; H_s (\conf_n(\mc{X}_{\underline{p}}) ; \mathbb{Q}) ), \]
where $\underline{p}$ is a configuration of $p$ non orbifold points, converging to
	\[ E^\infty_{st} = H_{s+t}(\conf_n(\mc{X})_p ; \mathbb{Q}). \]
	
The transfer map respects the simplicial structure of $\conf_n(\mc{X})_\bullet$ and so induces a map of Serre spectral sequences. On coefficients of the $E^2$ page this is given by
	\[ t: H_s(\conf_n(\mc{X}_{\underline{p}});\mathbb{Q} ) \to H_s(\conf_{n-1}(\mc{X}_{\underline{p}}) ; \mathbb{Q}) \]
The orbifolds $\mc{X}_{\underline{p}}$ are open, and admit collars around the points $p$. By \cref{prop: rational inverse}, $t$ induces an isomorphism on the the $E^2$ page (and so the $E^\infty$ page) of the Serre spectral sequence for $s \leq n/2$. But then the transfer map on the $E^1$ page of our original spectral sequence is an isomorphism and so is isomorphism on $E^\infty$ for $* = s+t \leq n/2$. 
\end{proof}

\appendix
\section{Covering maps} \label{sec: cover}
In this appendix, we summarise some of the work in Section 3 of \cite{bailes15}. In particular we want to show the following.

\begin{proposition} \label{prop: cover} The forgetting map
	\[ p : \conf_{n,m}(\mc{X}) \to \conf_n(\mc{X}) \]
is homotopic to a covering map.
\end{proposition}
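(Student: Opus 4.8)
The plan is to exhibit $p : \conf_{n,m}(\mc{X}) \to \conf_n(\mc{X})$ explicitly as a map which, up to homotopy, is a genuine covering map, by working at the level of structure groupoids and then passing to classifying spaces. First I would use the ordered model for configuration spaces noted in the remark after the definition: a point of $\pconf_n(\mc{X})$ is an ordered tuple $(x_1,\dots,x_n)$ of pairwise non-adjacent points of $\mc{X}_0$, and an arrow is a permutation $\sigma \in \sym_n$ together with arrows $\alpha_i : x_i \to x_{\sigma(i)}'$ in $\mc{X}_1$. Then $\conf_n(\mc{X})$ is the quotient groupoid by the $\sym_n$-action, and $\conf_{n,m}(\mc{X})$ is the analogous quotient by $\sym_m \times \sym_{n-m} \subset \sym_n$ (permuting within the chosen subcollections of sizes $m$ and $n-m$). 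The map $p$ is then induced by the subgroup inclusion $\sym_m \times \sym_{n-m} \hookrightarrow \sym_n$, so on classifying spaces it is the covering-like map associated to an action of a finite group.

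The key steps, in order, are: (1) Recall/verify that $\pconf_n(\mc{X})$ is an orbifold whose structure groupoid has, on the nose, a free $\sym_n$-action (no ordered tuple of pairwise non-adjacent points can be fixed by a nontrivial permutation, since a fixed tuple would require $x_i = x_{\sigma(i)}$); this is the point where ordered configurations are genuinely better-behaved than unordered ones. (2) Conclude that $\conf_n(\mc{X}) = \pconf_n(\mc{X}) /\!\!/ \sym_n$ and $\conf_{n,m}(\mc{X}) = \pconf_n(\mc{X}) /\!\!/ (\sym_m \times \sym_{n-m})$ as orbispaces, with $p$ the evident quotient map. (3) Pass to classifying spaces: $B(\pconf_n(\mc{X}) /\!\!/ G)$ is a homotopy quotient $B\pconf_n(\mc{X}) \times_G EG$, and because the $G$-action on the structure groupoid of $\pconf_n(\mc{X})$ is free and properly discontinuous in the appropriate orbifold sense, the homotopy quotient is weakly equivalent to the actual quotient, which is an honest $[\sym_n : \sym_m \times \sym_{n-m}] = \binom{n}{m}$-sheeted covering space of $B\conf_n(\mc{X})$. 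This is exactly the content being summarised from Chapter 3 of Bailes' thesis, so I would cite \cite[Chapter 3]{bailes15} for the careful orbifold bookkeeping while giving the groupoid-level argument in outline. A brief appeal to the appendix's own criteria (the unnamed lemma about when maps are covering maps) packages step (3).

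I expect the main obstacle to be step (3): making precise the sense in which a finite group acting ``freely'' on an orbifold groupoid yields a classifying space which is an honest covering of the quotient's classifying space, rather than merely a homotopy quotient that could have extra isotropy. The subtlety is that $\mc{X}$ itself may have orbifold points, so $\pconf_n(\mc{X})$ is not a manifold and one must check that the $\sym_n$-action does not interact badly with the existing isotropy — concretely, that the isotropy group of a point in $\conf_{n,m}(\mc{X})$ maps isomorphically (not just injectively) onto that of its image in $\conf_n(\mc{X})$, which is what forces $Bp$ to be a covering and not just a quasifibration. The freeness of the $\sym_n$-action on \emph{ordered} tuples is precisely what rescues this: the isotropy of a configuration comes entirely from arrows fixing the underlying set pointwise after a permutation, and on ordered representatives no permutation is involved, so isotropy groups on the ordered level inject compatibly into both $\conf_{n,m}$ and $\conf_n$ and the index bookkeeping is purely combinatorial. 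Once this is nailed down the covering degree $\binom{n}{m}$ (and hence the transfer used in \cref{prop: rational inverse}) falls out immediately.
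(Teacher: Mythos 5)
Your argument is correct, but it takes a genuinely different route from the paper's. The paper proves this via a topologically enriched version of Quillen's Theorem B (\cref{thm: mey}, following Meyer): it verifies that every arrow of $\conf_n(\mc{X})$ induces a homotopy equivalence of comma categories, identifies the homotopy fibre of $Bp$ with $B(\underline{x}\setminus p)$, and then exhibits a discrete skeletal subcategory of $\underline{x}\setminus p$ whose objects are indexed by coset representatives of $\sym_n/(\sym_m\times\sym_{n-m})$, so that the homotopy fibre is a finite discrete set. You instead present $\conf_n(\mc{X})$ and $\conf_{n,m}(\mc{X})$ as action groupoids of $\sym_n$, respectively $\sym_m\times\sym_{n-m}$, on the ordered configuration groupoid, and realise $Bp$ as the map of Borel constructions $B\pconf_n(\mc{X})\times_{\sym_m\times\sym_{n-m}}E\sym_n\to B\pconf_n(\mc{X})\times_{\sym_n}E\sym_n$, an honest $\binom{n}{m}$-sheeted covering. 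Both arguments ultimately rest on the same observation, that no nontrivial permutation can fix an ordered tuple of pairwise non-adjacent points: in the paper this is the step showing distinct coset representatives are not joined by arrows in the comma category, and in your version it is the freeness of the $\sym_n$-action. Your route is more direct and makes the covering degree (hence the transfer of \cref{prop: rational inverse}) immediate; its cost is the Thomason-type identification of the classifying space of an action groupoid with the Borel construction in the topologically enriched setting, which requires the same kind of care that the paper outsources to Meyer's theorem. One small correction of emphasis: the map of Borel constructions above is a $[G:H]$-sheeted covering for \emph{any} $G$-space, so freeness is not actually needed to pass between the two homotopy quotients; where you genuinely use it is in your last paragraph, to see that $p$ induces isomorphisms on isotropy groups (equivalently, that the orbifold fibre is discrete with no extra isotropy), and your argument for that point is sound.
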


It is useful to have a desription of the objects and arrows of the orbifold $\conf_{n,m}(\mc{X})$. 

Recall firstly that the orbifold $\conf_n(\mc{X})$ has a description as follows.
\begin{enumerate}
	\item The objects of $\conf_{n}(\mc{X})$ are ordered $n$-tuples, $(x_1, \ldots, x_n)$ of points in $\mc{X}_0$ so that there are no arrows in $\mc{X}_1$ from $x_i \to x_j$ for $i \neq j$.
	\item An arrow of $\conf_n(\mc{X})$ consists of a permutation $\sigma \in S_n$ and an arrow in $\mc{X}_1$ for each point in the configuration. Two elements of the configuration space are connected by an arrow if they differ by a reordering of the points or if a point moves to one of its ghost points. More precisely, there is an arrow $\alpha: \underline{x} \to \underline{y}$ if there exists $\sigma \in S_n$ and $\alpha_1, \ldots, \alpha_n \in \mc{X}_1$ such that
		\[ \alpha_i : x_i \to y_{\sigma(i)} \]
is an arrow for $i =1, \ldots, n$.
\end{enumerate}

With this description, $\conf_{n,m}(\mc{X})$ can be described as the orbifold with
\begin{enumerate}
	\item Objects the same as the objects of $\conf_n(\mc{X})$.
	\item Arrows that have a similar description to the arrows of $\conf_n(\mc{X})$, except that permutations are taken from $\sigma \in S_m \times S_{n-m} \subset S_n$.
\end{enumerate}

The restriction to $S_m \times S_{n-m}$ means that points in a configuration can be grouped into sets of size $m$ and $n-m$.

To show that $p : \conf_{n,m}(\mc{X}) \to \conf_n(\mc{X})$ is homotopic to a covering map, we first recall the definition of a comma category.

\begin{definition} Let $\mc{C}$, $\mc{D}$ be categories and $F: \mc{C} \to \mc{D}$ be a functor. Let $d \in \mc{D}$ be an object. The \emph{comma category} $d \setminus F$ is the category with
	\begin{itemize}
	\item Objects are of the form $(c, f) \in \mathrm{ob}(C) \times \mathrm{mor}(D)$ such that $f \in Hom_{\mc{D}}( d, F(c))$.
	\item Morphisms from $(c_1,f_1)$ to $(c_2,f_2)$ are morphisms $h \in Hom_\mc{C}(c_1, c_2)$ such that $F(h)\circ f_1 = f_2$. 
	\end{itemize}
\end{definition}

We will use the following theorem, which follows from the main theorem of \cite{meyer84}, which is a topologically enriched version of Quillen's Theorem B.

\begin{theorem}\label{thm: mey} Assume that the following condition holds: if $b: \underline{x} \to \underline{x}'$ is an arrow in $\conf_n(\mc{X})$ then $b$ induces a homotopy equivalence
	\[ B( \underline{x} \setminus p) \simeq B( \underline{x}' \setminus p). \]

Then $Bp$ has homotopy fibre $B(\underline{x} \setminus p)$
\end{theorem}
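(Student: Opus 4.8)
The plan is to obtain the statement as a direct specialization of Meyer's topological Theorem B \cite{meyer84} to the continuous functor $p:\conf_{n,m}(\mc{X})\to\conf_n(\mc{X})$. Recall the shape of Quillen's Theorem B: for a functor $F:\mc{C}\to\mc{D}$ such that every morphism $d\to d'$ of $\mc{D}$ induces a homotopy equivalence $B(d'\setminus F)\simeq B(d\setminus F)$, the square
\[
\xymatrix{
B(d\setminus F)\ar[r]\ar[d] & B\mc{C}\ar[d]^{BF}\\
B(d\setminus\mc{D})\ar[r] & B\mc{D}
}
\]
is homotopy cartesian. Meyer's main theorem is a version of this for continuous functors between topological categories, and since the orbifolds $\conf_{n,m}(\mc{X})$ and $\conf_n(\mc{X})$ are topological groupoids with smooth manifold object and arrow spaces, $p$ is a continuous functor of the type to which it applies.

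First I would fix an object $\underline{x}$ of $\conf_n(\mc{X})$ and observe that the hypothesis of the theorem is exactly the hypothesis demanded by the topological Theorem B with $F=p$ and $d=\underline{x}$: by assumption, every arrow $b:\underline{x}\to\underline{x}'$ induces a homotopy equivalence $B(\underline{x}\setminus p)\simeq B(\underline{x}'\setminus p)$. Meyer's theorem then guarantees that the corresponding square, with $\mc{C}=\conf_{n,m}(\mc{X})$ and $\mc{D}=\conf_n(\mc{X})$, is homotopy cartesian.

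Next I would collapse the bottom-left corner. The comma category $\underline{x}\setminus\conf_n(\mc{X})$ has $(\underline{x},\mathrm{id}_{\underline{x}})$ as an initial object, so its classifying space is contractible; the contracting homotopy is induced by the unique morphisms out of the initial object and is continuous, so contractibility persists in the topological setting. A homotopy cartesian square whose bottom-left corner is contractible identifies the top-left corner with the homotopy fibre of the right-hand vertical map over the image of the base point. Hence $B(\underline{x}\setminus p)$ is the homotopy fibre of $Bp$ over $B\underline{x}$, which is the desired conclusion.

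The main obstacle is not the formal argument above but the verification that Meyer's theorem genuinely applies: one must check continuity of $p$ and whatever properness, local triviality, or cofibrancy conditions the topologically enriched Theorem B requires of the categories and functor. These should follow from the orbifold axioms, namely properness of the structure groupoid and the manifold structure on objects and arrows, together with the explicit description of $p$ as the identity on objects and the inclusion $S_m\times S_{n-m}\hookrightarrow S_n$ on the permutation data of arrows. I would note in passing that, because $\conf_n(\mc{X})$ is a groupoid, the hypothesis is in fact automatic: an arrow $b$ and its inverse induce mutually inverse functors between the comma categories, so the induced maps on classifying spaces are homeomorphisms. This serves as a useful sanity check even though the theorem is phrased conditionally.
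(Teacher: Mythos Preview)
Your proposal is correct and matches the paper's approach exactly: the paper does not give a proof of this theorem at all but simply states that it ``follows from the main theorem of \cite{meyer84}, which is a topologically enriched version of Quillen's Theorem B.'' Your argument spells out precisely how it follows---the homotopy cartesian square from Theorem~B together with contractibility of $B(\underline{x}\setminus\conf_n(\mc{X}))$ via its initial object---and your passing remark that the hypothesis is automatic for groupoids is essentially what the paper verifies in the proposition immediately following the theorem.
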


Assuming the conditions of \cref{thm: mey} hold for 
	\[ p: \conf_{n,m}(\mc{X}) \to \conf_n(\mc{X}), \]
we see that 
	\[ hofib(Bp) = B((x_1, \ldots, x_n) \setminus p). \]
	
\begin{proposition} $B( (x_1, \ldots, x_n) \setminus p)$ is homotopically discrete.
\end{proposition}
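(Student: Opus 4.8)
The plan is to unpack the comma category $(x_1,\ldots,x_n)\setminus p$ explicitly and show that it has no nonidentity isomorphisms, i.e.\ that it is equivalent to a discrete set; since its classifying space is then a disjoint union of points, it is homotopically discrete. Recall from the description above that an object of $\conf_{n,m}(\mc{X})$ is an ordered $n$-tuple of points with the $S_m\times S_{n-m}$-grouping, and an arrow is a permutation in $S_m\times S_{n-m}$ together with a choice of arrow in $\mc{X}_1$ for each point. The functor $p$ is the identity on objects and simply forgets the restriction on the permutation (it includes $S_m\times S_{n-m}\hookrightarrow S_n$ on arrows). So an object of $\underline{x}\setminus p$, where $\underline{x}=(x_1,\ldots,x_n)$, is a pair consisting of an object $\underline{y}$ of $\conf_{n,m}(\mc{X})$ together with an arrow $f\colon\underline{x}\to p(\underline{y})=\underline{y}$ in $\conf_n(\mc{X})$; and a morphism $(\underline{y}_1,f_1)\to(\underline{y}_2,f_2)$ is an arrow $h\colon\underline{y}_1\to\underline{y}_2$ in $\conf_{n,m}(\mc{X})$ with $p(h)\circ f_1=f_2$.

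First I would observe that every hom-set in $\conf_n(\mc{X})$ between configurations is finite — in fact an arrow $\underline{x}\to\underline{y}$ is determined by a permutation $\sigma$ and a tuple of arrows $\alpha_i\colon x_i\to y_{\sigma(i)}$ in $\mc{X}_1$, and by properness of the structure groupoid together with the configuration condition (no two points share a ghost), for each admissible $\sigma$ there is at most a finite set of such tuples (indeed, the isotropy data is finite). Hence $\underline{x}\setminus p$ has a countable, in fact finite, set of objects. The key step is then to check that the only morphisms in $\underline{x}\setminus p$ are identities: given $(\underline{y}_1,f_1)$ and $(\underline{y}_2,f_2)$, a morphism $h$ satisfies $p(h)\circ f_1=f_2$, so $p(h)=f_2\circ f_1^{-1}$ is forced; thus there is \emph{at most one} morphism between any two objects. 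Moreover such an $h$ exists in $\conf_{n,m}(\mc{X})$ (not merely in $\conf_n(\mc{X})$) precisely when the underlying permutation of $f_2 f_1^{-1}$ lies in $S_m\times S_{n-m}$, which pins down exactly which objects are connected. A category in which every hom-set has at most one element is a preorder, and since all morphisms are isomorphisms it is in fact an equivalence relation; its classifying space is homotopy equivalent to the discrete set of equivalence classes. Therefore $B(\underline{x}\setminus p)$ is homotopically discrete.

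I would also note, to make the count transparent for the covering-map application that follows, that the equivalence classes correspond bijectively to the ways of partitioning the $n$-element configuration $\underline{x}$ (together with compatible ghost-point/isotropy choices) into an ordered pair of subcollections of sizes $m$ and $n-m$; the cardinality is therefore independent of $\underline{x}$, which is exactly what one needs to combine with \cref{thm: mey} to conclude that $Bp$ is a fibration with discrete fibre, hence homotopic to a covering map.

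The main obstacle I anticipate is not the categorical bookkeeping but verifying the hypothesis of \cref{thm: mey} — that an arrow $b\colon\underline{x}\to\underline{x}'$ induces a homotopy equivalence $B(\underline{x}\setminus p)\simeq B(\underline{x}'\setminus p)$ — and checking carefully that the finiteness of hom-sets genuinely holds in the orbifold (as opposed to manifold) setting, where isotropy groups contribute extra arrows; one must use the configuration condition (no arrows $x_i\to x_j$) to rule out "collisions" of ghost points that would otherwise enlarge or deform the fibre. Once that is in hand, the identification of the fibre as an equivalence relation, and hence as a homotopically discrete space, is immediate.
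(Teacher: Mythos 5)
Your argument is correct, but it reaches the conclusion by a more abstract route than the paper. You observe that since $\conf_n(\mc{X})$ is a groupoid, any morphism $h\colon(\underline{y}_1,f_1)\to(\underline{y}_2,f_2)$ of the comma category must have $p(h)=f_2\circ f_1^{-1}$, and since $p$ is faithful (on arrows it is just the inclusion of those arrows whose permutation lies in $S_m\times S_{n-m}$) this pins down $h$ uniquely; hence every hom-set has at most one element, the comma category is a setoid, and its classifying space is the discrete set of components. This is a clean general principle (the comma category of a faithful functor between groupoids is always an equivalence relation) and it disposes of the possible nontrivial isotropy arrows $\alpha_i\colon x_i\to x_i$ in one stroke. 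The paper instead exhibits an explicit discrete skeleton, with objects $((x_{\sigma(1)},\ldots,x_{\sigma(n)}),\sigma)$ indexed by coset representatives $\sigma\in S_n/(S_{n-m}\times S_m)$, and checks by hand that every object maps to exactly one representative and that distinct representatives are not connected. The trade-off: the paper's computation simultaneously identifies the fibre as the coset set, hence the degree $\binom{n}{m}$ of the covering, which you only record as a remark at the end; your version is shorter but should state explicitly that $p$ is faithful, since that is the step converting ``$p(h)$ is forced'' into ``$h$ is unique.'' Your auxiliary finiteness claim for hom-sets (via properness and \'etaleness of $(s,t)$) is true but not needed for homotopy discreteness. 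As you note, the hypothesis of Theorem \ref{thm: mey} is a separate verification, which the paper handles in the following proposition.
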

\begin{proof} 
We first describe a skeletal subcategory of $\underline{x} \setminus p = ((x_1. \ldots, x_n) \setminus p)$. 

Let $\widetilde{S_{n,m}}$ be a set of coset representatives of $S_n / (S_{n-m} \times S_m)$. Consider the category whose objects are
	\[ \{ (x_{\sigma(1)}, \ldots, x_{\sigma(n)}), \sigma) \st \sigma \in \widetilde{S_{n,m}} \} \]
with only identity morphisms. We will show that this category is a skeletal subcategory of $\underline{x} \setminus p$ by showing the following:
\begin{enumerate}
	\item If $(\underline{y}, f)$ is an object in $\underline{x} \setminus p$ then there exists a coset $\sigma \in \widetilde{S_{n,m}}$ such that $(\underline{y},f)$ is connected to $(x_{\sigma(1)}, \ldots, x_{\sigma(n)}), \sigma)$ by an arrow; and
	\item If $\sigma_1, \sigma_2 \in \widetilde{S_{n,m}}$ such that $\sigma_1 \neq \sigma_2$ then there is no arrow in $\underline{x} \setminus p$ from $(x_{\sigma_1(1)}, \ldots, x_{\sigma_1(n)}), \sigma_1)$ to $(x_{\sigma_2(1)}, \ldots, x_{\sigma_2(n)}), \sigma_2)$.
\end{enumerate}

To see the first part, let $(\underline{y}, f)$ be an object in $\underline{x} \setminus p$. We want to find a $\sigma \in \widetilde{S_{n,m}}$ such that there is an arrow $(\underline{y}, f) \to (\sigma(\underline{x}), \sigma)$. By definition, such an arrow $f$, made up of a permutation $\rho \in S_{n-m} \times S_m$ and arrows $\alpha_i \in \mc{X}_1$ such that $\alpha_i : x_i \to y_{\rho^{-1}(i)}$.
We can think of $f$ as a composite $f = \rho \circ \alpha$ where $\rho(\underline{x}')$ means permute the indices of $\underline{x}'$ and $\alpha(\underline{x}')$ means do $\alpha_i$ to the $i$th entry of $\underline{x}$.

Now $\alpha$ and $\rho$ give an arrow in $\conf_{n,m}(\mc{X})$ by $\rho \circ \alpha^{-1}$ which one can check defines a map in the comma category by
	\[ ((y_1, \ldots, y_n), f) \mapsto ((x_{\rho(1)}, \ldots, x_{\rho(n)}), \rho). \]
	
Now since $\widetilde{S_{n,m}}$ is the set of coset representatives of $S_n / (S_{n-m} \times S_m)$, there exists a $\sigma \in \widetilde{S_{n,m}}$ such that $\rho \in [\sigma]$. Therefore $\sigma \circ \rho^{-1} \in S_{n-m} \times S_m$. Then $\sigma \circ \rho^{-1}$ is an arrow in $\conf_{n,m}(\mc{X})$ which defines an arrow in $\underline{x} \setminus p$,
	\[ \sigma \circ \rho^{-1}: ((x_{\rho(1)}, \ldots, x_{\rho(n)}), \rho) \to ((x_{\sigma(1)}, \ldots, x_{\sigma(n)},), \sigma). \]

Composing the two arrows we have constructed, we get an arrow
	\[ (\sigma \circ \rho^{-1}) \circ (\rho \circ \alpha^{-1}) : ((y_1, \ldots, y_n) ,f) \to ((x_{\sigma(1)}, \ldots, x_{\sigma(n)}), \sigma), \]
where $\sigma \in \widetilde{S_{n,m}}$. This completes the first part.

For the second part, we need to show that if $\sigma_1, \sigma_2 \in \widetilde{S_{n,m}}$ such that $\sigma_1 \neq \sigma_2$ then there is no arrow of the from $(\sigma_1(\underline{x}), \sigma_1) \to (\sigma_2(\underline{x}), \sigma_2)$.

Suppose for contradiction that such an arrow exists. Then there is an arrow in $\conf_{n,m}(\mc{X})$ of the form
	\[ (x_{\sigma_1(1)}, \ldots, x_{\sigma_1(n)}) \to (x_{\sigma_2(1)}, \ldots, x_{\sigma_2(n)}). \]
The arrows in $\conf_{n,m}(\mc{X})$ are formed from a re-ordering $\rho \in S_{n-m} \times S_m$ and $n$ arrows $\alpha_i : x_{\sigma_1(i)} \to x_{\rho \circ \sigma_2(i)}$ in $\mc{X}_1$. Since there are no arrows $x_i \to x_j$, it must be that $s(\alpha_i) = t(\alpha_i)$, i.e., for each $i$, $\alpha_i$ has the same source and target. That is
	\[ x_{\sigma_1(i)} = x_{\rho \circ \sigma_2(i)}. \]
Therefore $\rho \circ \sigma_1 = \sigma_2$, so $[\sigma_1] = [\sigma_2]$ which is a contradiction.

We have now shown that our subcategory of $\underline{x} \setminus p$ is skeletal. Note that it is discrete and so $\underline{x} \setminus p$ is homotopically discrete.
\end{proof}

We now want to check that the conditions of \cref{thm: mey} are satisfied.
\begin{proposition} If $b: \underline{x} \to \underline{x}'$ is an arrow in $\conf_n(\mc{X})$ then $b$ induces a homotopy equivalence
	\[ B( \underline{x} \setminus p) \simeq B( \underline{x}' \setminus p). \]
\end{proposition}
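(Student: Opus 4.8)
The plan is to exploit the fact that $\conf_n(\mc{X})$ is a groupoid, so that the arrow $b$ is invertible; the comparison of comma categories it induces will then be an isomorphism of topological categories, not merely a homotopy equivalence.

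First I would write down the functor induced by $b$. Since $p$ is the identity on objects, an object of $\underline{x}\setminus p$ is a pair $(c,f)$ with $c\in\mathrm{ob}(\conf_{n,m}(\mc{X}))$ and $f\colon\underline{x}\to c$ an arrow of $\conf_n(\mc{X})$, and a morphism $(c_1,f_1)\to(c_2,f_2)$ is an arrow $h\colon c_1\to c_2$ of $\conf_{n,m}(\mc{X})$ with $p(h)\circ f_1=f_2$. Precomposition with $b$ then gives
\[ b^*\colon\underline{x}'\setminus p\longrightarrow\underline{x}\setminus p,\qquad (c,f)\mapsto(c,f\circ b),\quad h\mapsto h, \]
which is well defined because $p(h)\circ(f_1\circ b)=(p(h)\circ f_1)\circ b=f_2\circ b$, and continuous because it is given by composition in the topological groupoid $\conf_n(\mc{X})$ with the fixed arrow $b$.

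Next I would use that $\conf_n(\mc{X})$ is a groupoid: $b$ admits an inverse arrow $b^{-1}\colon\underline{x}'\to\underline{x}$, and $(b^{-1})^*$ is a strict two-sided inverse of $b^*$, since $(b^{-1})^*\circ b^*(c,f)=(c,f\circ b\circ b^{-1})=(c,f)$ and likewise in the other order, both functors being the identity on morphisms. Hence $b^*$ is an isomorphism of topological categories, so $B(b^*)\colon B(\underline{x}'\setminus p)\to B(\underline{x}\setminus p)$ is a homeomorphism, in particular a homotopy equivalence, which is exactly the hypothesis needed for \cref{thm: mey}.

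The argument is entirely formal; the only point requiring a word of care is the enriched bookkeeping, namely that $B$ carries an isomorphism of categories internal to $\mathrm{Top}$ to a homeomorphism of classifying spaces, which holds since the nerve is functorial and geometric realisation preserves isomorphisms. Note that the preceding proposition on homotopical discreteness is not needed for this step, although one could alternatively deduce the claim from it by checking that $b^*$ restricts to a bijection between the skeletal subcategories indexed by $\widetilde{S_{n,m}}$.
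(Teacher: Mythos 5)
Your proposal is correct and is essentially the paper's own argument: both exploit that $\conf_n(\mc{X})$ is a groupoid to produce an inverse arrow $b^{-1}$, show that precomposition gives a strict isomorphism of (topological) comma categories with inverse $(b^{-1})^*$, and conclude that the classifying spaces are homeomorphic. The only cosmetic difference is that the paper writes out $b^{-1}$ explicitly in terms of the permutation-and-arrows decomposition $b=\sigma\circ\alpha$ (and orients $b^*$ in the opposite direction), whereas you simply invoke invertibility, which is immaterial here.
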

\begin{proof} Let $b^* : \underline{x} \setminus p \to \underline{x}' \setminus p$ be the map induced by $b$. On objects it is given by
	\[ b^*_0(c,f) = (c, f \circ b).\]
On morphisms acts as the identity on $h \in (\conf_n(\mc{X}))_0$ since the following diagram commutes.
\[ \xymatrix{
	&	\underline{x}' \ar[d]^b		\ar[ddl]_{f\circ b}	\ar[ddr]^{g \circ b}	&	\\
	&	\underline{x} \ar[dl]^f	\ar[dr]_g	&	\\
p(\underline{y}) \ar[rr]_{p(h) = p(b^*(h))}	&	&	p(\underline{z}) 
} \]

We want to show that $b^*$ is an isomorphism of categories. If $b = \sigma \circ \alpha$, then define $b^{-1} = \alpha^{-1} \circ \sigma^{-1} = \sigma^{-1} \circ \gamma$, where $\gamma = \sigma(\alpha^{-1})$. One can then check that $(b^{-1})^*$ is an inverse to $b^*$. A careful check of this can be found in Section 3 of \cite{bailes15}.
\end{proof}

Applying \cref{thm: mey} we have now shown \cref{prop: cover}.

\bibliographystyle{alpha}
\bibliography{references}

\end{document}